\documentclass[12pt,reqno]{article}
\usepackage{amsthm,amsmath,amssymb,enumerate,cite,bm}
\usepackage[a4paper,margin=25mm,bottom=35mm]{geometry}
\usepackage[colorlinks]{hyperref}
\usepackage[english]{babel}
\usepackage[utf8]{inputenc}
\usepackage[T1]{fontenc}

\newtheorem{thm}{Theorem}[section]
\newtheorem{lem}[thm]{Lemma}
\newtheorem{cor}[thm]{Corollary}
\newtheorem{prop}[thm]{Proposition}
\theoremstyle{definition}
\newtheorem{exa}[thm]{Example}
\theoremstyle{remark}
\newtheorem*{rem}{Remark}

\numberwithin{equation}{section}

\newcommand{\N}{\mathbb{N}}
\newcommand{\ra}{\rightarrow}
\newcommand{\rs}{\rightsquigarrow}
\newcommand{\cd}{\cdot}
\newcommand{\ci}{\cdot}
\newcommand{\st}{\star}

\renewcommand{\phi}{\varphi}
\renewcommand{\rho}{\varrho}

\newcommand{\alg}[1]{\mathbf{#1}}
\newcommand{\var}[1]{\mathcal{#1}}
\newcommand{\con}[1]{\mathrm{Con}(\mathbf{#1})}
\newcommand{\bcon}[1]{\mathbf{Con}(\mathbf{#1})}
\newcommand{\conq}[2]{\mathrm{Con}_{\mathcal{#1}}(\mathbf{#2})}
\newcommand{\bconq}[2]{\mathbf{Con}_{\mathcal{#1}}(\mathbf{#2})}

\newcommand{\bidq}[2]{\mathbf{Id}_{\mathcal{#1}}(\mathbf{#2})}
\newcommand{\fil}[1]{\mathrm{Fi}(\mathbf{#1})}
\newcommand{\bfil}[1]{\mathbf{Fi}(\mathbf{#1})}
\newcommand{\pfil}[1]{\mathrm{Pr}(\mathbf{#1})}
\newcommand{\bpfil}[1]{\mathbf{Pr}(\mathbf{#1})}


\title{Some properties of pseudo-BCK- and pseudo-BCI-algebras}

\author{Petr Emanovsk\'{y}\footnote{Corresponding author. E-mail: petr.emanovsky@upol.cz} \and Jan K\"{u}hr\footnote{E-mail: jan.kuhr@upol.cz}}

\date{\small
Department of Algebra and Geometry,
Faculty of Science, Palack\'{y} University in~Olomouc\\ 
17.~listopadu 12, 77146 Olomouc, Czech Republic}

\begin{document}

\noindent
This is an accepted manuscript of an article published in Fuzzy Sets and Systems.\\ 
The final publication is available at 
\url{https://doi.org/10.1016/j.fss.2016.12.014}.

\maketitle

\begin{abstract}
Pseudo-BCI-algebras generalize both BCI-algebras and pseudo-BCK-algebras, which are a non-commutative generalization of BCK-algebras. In this paper, following \cite{RvA}, we show that pseudo-BCI-algebras are the residuation subreducts of semi-integral residuated po-monoids and characterize those pseudo-BCI-algebras which are direct products of pseudo-BCK-algebras and groups (regarded as pseudo-BCI-algebras).
We also show that the quasivariety of pseudo-BCI-algebras is relatively congruence modular; in fact, we prove that this holds true for all relatively point regular quasivarieties which are relatively ideal determined, in the sense that the kernels of relative congruences can be described by means of ideal terms.

\medskip

\noindent\emph{Keywords}: Pseudo-BCK-algebra; pseudo-BCI-algebra; filter; prefilter; ideal term; relative congruence modularity. 
\end{abstract}


\section{Introduction}

BCK- and BCI-algebras were introduced by Is\'{e}ki \cite{I66} as algebras induced by Meredith's implicational logics \textsf{BCK} and \textsf{BCI}. As a matter of fact, BCK-algebras are the equivalent algebraic semantics for the logic \textsf{BCK}, but \textsf{BCI} is not algebraizable in the sense of \cite{BP89}. Nevertheless, both BCK- and BCI-algebras are closely related to residuated commutative po-monoids because, as is well-known, BCK-algebras are the $\{\ra,1\}$-subreducts of integral residuated commutative po-monoids (see \cite{Fl,OK,Pa}), and BCI-algebras are the $\{\ra,1\}$-subreducts of semi-integral ones, where ``semi-integral'' means that the monoid identity $1$ is a maximal element of the underlying poset (see \cite{RvA}).

Also some other algebras of logic, such as MV-algebras, BL-algebras or hoops, are (equivalent to) certain integral residuated commutative po-monoids. Although non-commutative residuated po-monoids were known since the 1930s (for historical overview see \cite{GJKO}), non-commutative versions of the aforementioned algebras were introduced only about 15 years ago in a series of papers by Georgescu, Iorgulescu and coauthors in which they defined pseudo-MV-algebras\footnote{Rach\r{u}nek \cite{Ra} independently introduced another non-commutative generalization of MV-algebras, called GMV-algebras. In fact, pseudo-MV-algebras and these GMV-algebras are equivalent. In \cite{GJKO}, the name ``GMV-algebra'' has a different, more general meaning.} \cite{GI-mv}, pseudo-BL-algebras \cite{DNGI-bl}, pseudo-hoops \cite{GIP-ph}, and pseudo-BCK-algebras \cite{GI}. The prefix ``pseudo'' is meant to indicate non-commutativity, so that a commutative pseudo-X-algebra is an X-algebra. We refer the reader to \cite{ciungu} or \cite{iorgulescu} for an overview of non-commutative generalizations of algebras of logic.

Pseudo-BCK-algebras are algebras $(A,\ra,\rs,1)$ with two binary operations $\ra,\rs$ and a constant $1$ such that $(A,\ra,1)$ is a BCK-algebra whenever $\ra$ and $\rs$ coincide; the exact definition is given in Section~\ref{sect-uvod}. An important ingredient is the partial order which is given by $x\leq y$ iff $x\ra y=1$ iff $x\rs y=1$. The underlying poset $(A,\leq)$ has no special properties except that $1$ is its greatest element.
A particular case of pseudo-BCK-algebras are Bosbach's cone algebras \cite{Bo} which may be thought of as the $\{\ra,\rs,1\}$-subreducts of negative cones of lattice-ordered groups.
That pseudo-BCK-algebras are indeed the $\{\ra,\rs,1\}$-subreducts of integral residuated po-monoids  was proved in \cite{JK-ja07} and independently in \cite{vA06} where pseudo-BCK-algebras are called biresiduation algebras.

In this context we should mention Komori's BCC-algebras \cite{Ko} which are the $\{\ra,1\}$-subreducts of integral one-sided residuated po-monoids (see \cite{OK}). It was noted in \cite{ZL} that an algebra $(A,\ra,\rs,1)$ is a pseudo-BCK-algebra if and only if both $(A,\ra,1)$ and $(A,\rs,1)$ are BCC-algebras satisfying the identity $x\ra (y\rs z)=y\rs (x\ra z)$.

Pseudo-BCI-algebras were introduced by Dudek and Jun \cite{DJ} as a non-commutative generalization of BCI-algebras.
They are also algebras $(A,\ra,\rs,1)$ with the same partial order as pseudo-BCK-algebras such that $(A,\ra,1)$ is a BCI-algebra when $\ra$ and $\rs$ coincide, but in contrast to pseudo-BCK-algebras, $1$ is merely a maximal element of the underlying poset. Thus, roughly speaking, the difference between pseudo-BCK- and pseudo-BCI-algebras is the same as the difference between BCK- and BCI-algebras.
In the present paper, we focus on some properties of BCI-algebras and try to establish analogous results in the setting of pseudo-BCI-algebras.
Basically, there are three topics we deal with: 
(1)~embedding of pseudo-BCI-algebras into residuated po-monoids;
(2)~congruence properties of the quasivarieties of pseudo-BCK- and pseudo-BCI-algebras; and
(3)~direct products of pseudo-BCK-algebras and groups which are regarded as pseudo-BCI-algebras.

The paper is organized as follows. Section~\ref{sect-uvod} is an introduction to residuated po-monoids, pseudo-BCK- and pseudo-BCI-algebras.
In Section~\ref{sect-vnoreni}, we prove that up to isomorphism every pseudo-BCI-algebra is a subalgebra of the $\{\ra,\rs,1\}$-reduct of a semi-integral residuated po-monoid. Actually, we only sketch the construction which is almost identical to the one for BCI-algebras and semi-integral residuated commutative po-monoids presented by Raftery and van Alten \cite{RvA}.

Section~\ref{sect-filters} is devoted to congruence properties of pseudo-BCK- and pseudo-BCI-al\-ge\-bras. They form relatively $1$-regular quasivarieties $\var{PBCK}$ and $\var{PBCI}$; the former is relatively congruence distributive, the latter only relatively congruence modular. In fact, most results are proved for any relatively ideal determined quasivariety $\var Q$, by which we mean that the kernels ($1$-classes) of relative congruences of algebras in $\var Q$ may be characterized via ideal terms, similarly to ideal determined varieties (see \cite{GU,BR99}).
We prove that the relative congruence lattices of algebras in such a quasivariety $\var Q$ are arguesian. 
We also briefly discuss prefilters of pseudo-BCK- and pseudo-BCI-algebras; the name ``filter'' is reserved for the congruence kernels. Here, we can see an analogy with subgroups of groups: when a group is regarded as a pseudo-BCI-algebra, then its subgroups are precisely the prefilters of this pseudo-BCI-algebra. 
The study of prefilter lattices has proven to be useful for pseudo-BCK-algebras as well as various classes of residuated po-monoids; see e.g. \cite{JK-ja07,JK-csmj05,DK,HK,vA02,BKLT}.

In Section~\ref{sect-direktnisoucin}, following \cite{RvA}, we characterize those pseudo-BCI-algebras which are isomorphic to the direct product of a pseudo-BCK-algebra and a group which is regarded as a pseudo-BCI-algebra.


\section{Preliminaries}
\label{sect-uvod}

The basic concept is that of a residuated partially ordered monoid (po-monoid for short).
A \emph{residuated po-monoid}\footnote{It follows from \eqref{rl} that $(M,\leq,\cdot,1)$ is indeed a \emph{partially ordered monoid}, in the sense that $\leq$ is preserved by multiplication on both sides.} is a structure $\alg M = (M,\leq,\cdot,\ra,\rs,1)$, where $(M,\cdot,1)$ is a monoid, $\leq$ is a partial order\footnote{If the poset $(M,\leq)$ is a lattice, then $\alg M$---or more exactly, $(M,\vee,\wedge,\cdot,\ra,\rs,1)$ where $\vee,\wedge$ are the associated lattice operations---is called a \emph{residuated $\ell$-monoid} or a \emph{residuated lattice}.} on $M$, and $\ra,\rs$ are binary operations on $M$ satisfying the \emph{residuation law}, for all $x,y,z\in M$:
\begin{equation}
\label{rl}
x\leq y\ra z \quad\text{iff}\quad x\cd y\leq z,\quad\text{and}\quad x\leq y\rs z \quad\text{iff}\quad y\cd x\leq z.
\end{equation}
It is clear that the monoid reduct $(M,\cd,1)$ is commutative if and only if $x\ra y=x\rs y$ for all $x,y\in M$, in which case $(M,\leq,\cd,\ra,1)$ is a \emph{residuated commutative po-monoid}.
A residuated po-monoid is said to be 
\begin{enumerate}[\indent\upshape (1)]
\item \emph{semi-integral} if the monoid identity $1$ is a maximal element of the poset $(M,\leq)$;
\item \emph{integral} if $1$ is the greatest element of $(M,\leq)$.
\end{enumerate}
In the (semi-) integral case, $\leq$ is specified by either of the ``arrows'' because by \eqref{rl} one has
\begin{equation}\label{usporadani}
x\leq y \quad\text{iff}\quad x\ra y=1 \quad\text{iff}\quad x\rs y=1.
\end{equation}
Thus (semi-) integral residuated po-monoids may be likewise defined as algebras $\alg M = (M,\cdot,\ra,\rs,1)$ of type $(2,2,2,0)$. The class of such algebras is a quasivariety, but not a variety. 
The quasivariety of semi-integral residuated po-monoids can be axiomatized by
\begin{gather}
\label{rpom1a} 
(x\ra y)\rs ((y\ra z)\rs (x\ra z)) = 1,\\ 
\label{rpom1b}
(x\rs y)\ra ((y\rs z)\ra (x\rs z)) = 1,\\
\label{rpom2a}
1\ra x = x,\\
\label{rpom2b}
1\rs x = x,\\
\label{rpom3}
(x\cd y)\ra z = x\ra (y\ra z),\\
\label{rpom4}
x\ra y = 1 \quad\&\quad y\ra x = 1 \quad\Rightarrow\quad x = y,
\end{gather}
and the quasivariety of integral residuated po-monoids by \eqref{rpom1a}--\eqref{rpom4} together with
\begin{equation}\label{integral}
x\ra 1 = 1.
\end{equation}
The proof is straightforward. In \eqref{rpom4} and \eqref{integral} we could equally use $\rs$ instead of $\ra$, and \eqref{rpom3} could be replaced by the identity $(x\cd y)\rs z = y\rs (x\rs z)$.

Pseudo-BCK-algebras were introduced by Georgescu and Iorgulescu \cite{GI} as a non-com\-mu\-ta\-tive generalization of BCK-algebras; they are equivalent to biresiduation algebras which were defined by van Alten \cite{vA06}.
Later, pseudo-BCI-algebras were defined by Dudek and Jun \cite{DJ} along the same lines, thus they generalize both BCI-algebras and pseudo-BCK-algebras. There are several possible definitions out of which we choose this (cf. \cite{DKD13} or \cite{JK-hab}):

A \emph{pseudo-BCI-algebra} is an algebra $\alg{A}=(A,\ra,\rs,1)$ of type $(2,2,0)$ satisfying the identities \eqref{rpom1a}--\eqref{rpom2b} and the quasi-identity \eqref{rpom4},
and a \emph{pseudo-BCK-algebra} is a pseudo-BCI-algebra which in addition satisfies the identity \eqref{integral}.\footnote{As in the case of residuated po-monoids, in \eqref{rpom4} and \eqref{integral}, $\ra$ could be equivalently replaced by $\rs$.}
If $\alg A$ is a pseudo-BCI-algebra (resp. pseudo-BCK-algebra) in which $x\ra y=x\rs y$ for all $x,y\in A$, then the algebra $(A,\ra,1)$ is a \emph{BCI-algebra} (resp. \emph{BCK-algebra}).\footnote{Is\'{e}ki \cite{I66} originally defined BCI- and BCK-algebras as algebras $(A,*,0)$, where $*$ may be thought of as a kind of subtraction and $0$ is a minimal (or the least) element of the underlying poset $(A,\leq)$ which is defined by $x\leq y$ iff $x*y=0$. 
Similarly, in \cite{GI} and \cite{DJ}, pseudo-BCI- and pseudo-BCK-algebras were defined as algebras $(A,*,\circ,0)$ with two subtractions $*,\circ$ and minimal element $0$.}

For any pseudo-BCI-algebra $\alg{A}$, the relation $\leq$ given by \eqref{usporadani}---more precisely, by $x\leq y$ iff $x\ra y=1$, which is equivalent to $x\rs y=1$---is a partial order on $A$. The poset $(A,\leq)$ is the \emph{underlying poset} of $\alg A$. The difference between pseudo-BCK- and pseudo-BCI-algebras is that in the former case, $1$ is the greatest element of the underlying poset, while in the latter case, $1$ is only a maximal element. Thus, pseudo-BCK-algebras are integral and pseudo-BCI-algebras semi-integral.

Of course, pseudo-BCI- and pseudo-BCK-algebras are closely related to residuated po-monoids. If $\alg{M}=(M,\cdot,\ra,\rs,1)$ is a semi-integral (resp. integral) residuated po-monoid, then every subalgebra of the reduct $(M,\ra,\rs,1)$ is a pseudo-BCI-algebra (resp. pseudo-BCK-algebra). 
All pseudo-BCK-algebras (biresiduation algebras) arise in this way, i.e., up to isomorphism, every pseudo-BCK-algebra is a $\{\ra,\rs,1\}$-subreduct of an integral residuated po-monoid. In Section \ref{sect-vnoreni} we establish an analogue for pseudo-BCI-algebras and semi-integral residuated po-monoids.

An easy but important observation is this: 
Given a pseudo-BCI-algebra (resp. pseudo-BCK-algebra) $\alg{A}=(A,\ra,\rs,1)$, the algebra $\alg{A}^\dagger=(A,\rs,\ra,1)$ is a pseudo-BCI-algebra (resp. pseudo-BCK-algebra), too. It is plain that $\alg{A}$ and $\alg{A}^\dagger$ have the same underlying poset $(A,\leq)$, but it can easily happen that the algebras $\alg A$ and $\alg A^\dagger$ are \emph{not} isomorphic. For example, the prelinearity identities
\begin{gather*}
(x\ra y)\ra z\leq ((y\ra x)\ra z)\ra z \quad\text{and}\quad
(x\rs y)\rs z\leq ((y\rs x)\rs z)\rs z
\end{gather*}
are independent in general. Note that if $\alg A$ satisfies either of the two identities, then $z\leq 1$ holds in $\alg A$, so $\alg A$ is a pseudo-BCK-algebra.

In the following lemma we list the basic arithmetical properties of pseudo-BCI-al\-ge\-bras. For completeness, we also include some properties that have been implicitly mentioned above, such as \eqref{vl1}, \eqref{vl2} or \eqref{vl4}.

\begin{lem}[cf. \cite{DJ}, \cite{JKN}, \cite{LP}]
\label{L1}
In any pseudo-BCI-algebra $\alg{A}$, for all $x,y,z\in A$:
\begin{enumerate}[\indent\upshape (1)]
\item\label{vl1} $x\ra x=1$, $x\rs x=1$;
\item\label{vl2} $x\ra y\leq (y\ra z)\rs (x\ra z)$, $x\rs y\leq (y\rs z)\ra (x\rs z)$;
\item\label{vl3} $x\leq (x\ra y)\rs y$, $x\leq (x\rs y)\ra y$;
\item\label{vl4} $x\ra y=1$ iff $x\rs y=1$;
\item\label{vl5} $x\leq y$ implies $y\ra z\leq x\ra z$ and $y\rs z\leq x\rs z$;
\item\label{vl6} $x\ra (y\rs z)=y\rs (x\ra z)$;
\item\label{vl7} $x\leq y\ra z$ iff $y\leq x\rs z$;
\item\label{vl8} $x\ra y\leq (z\ra x)\ra (z\ra y)$, $x\rs y\leq (z\rs x)\rs (z\rs y)$;
\item\label{vl9} $x\leq y$ implies $z\ra x\leq z\ra y$ and $z\rs x\leq z\rs y$;
\item\label{vl10} $x\ra 1=x\rs 1$;
\item\label{vl11} $(x\ra y)\ra 1=(x\ra 1)\rs (y\ra 1)$, $(x\rs y)\rs 1=(x\rs 1)\ra (y\rs 1)$;
\item $((x\ra y)\rs y)\ra y=x\ra y$, $((x\rs y)\ra y)\rs y=x\rs y$.
\end{enumerate}
\end{lem}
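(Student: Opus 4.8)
The plan is to establish the first identity by proving the two inequalities $x\ra y\leq ((x\ra y)\rs y)\ra y$ and $((x\ra y)\rs y)\ra y\leq x\ra y$ separately and then invoking antisymmetry of the underlying order. The second identity will follow for free by applying the first one to the dual pseudo-BCI-algebra $\alg A^\dagger=(A,\rs,\ra,1)$, in which the roles of $\ra$ and $\rs$ are interchanged; indeed, the first identity read in $\alg A^\dagger$ is precisely the second identity in $\alg A$. So I would only work out the first identity explicitly.

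For the inequality $x\ra y\leq ((x\ra y)\rs y)\ra y$, I would simply substitute $x\ra y$ for the variable $x$ in the second inequality of part~\eqref{vl3}, namely $x\leq (x\rs y)\ra y$. This instance reads $x\ra y\leq ((x\ra y)\rs y)\ra y$, which is exactly what is needed.

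For the reverse inequality, I would start from the first inequality of part~\eqref{vl3}, namely $x\leq (x\ra y)\rs y$, and feed it into the antitonicity of $\ra$ in its first argument recorded in part~\eqref{vl5}: from $x\leq (x\ra y)\rs y$ we obtain $((x\ra y)\rs y)\ra y\leq x\ra y$ (taking $z:=y$ there). Combining the two inequalities yields $((x\ra y)\rs y)\ra y=x\ra y$.

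There is no genuine obstacle here: the whole argument is a matter of choosing the right substitution instances of parts~\eqref{vl3} and~\eqref{vl5}, together with the $\dagger$-duality to halve the work. The only point demanding care is the direction of the inequality in part~\eqref{vl5}, which reverses the order; one must apply it to $x\leq (x\ra y)\rs y$ and in no other way, since getting this backwards would produce a useless bound.
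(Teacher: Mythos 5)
Your argument for the last item is mathematically sound: substituting $x\ra y$ for $x$ in the second inequality of item (3) gives $x\ra y\leq ((x\ra y)\rs y)\ra y$; applying the antitonicity of item (5) to the inequality $x\leq (x\ra y)\rs y$ (with $z:=y$) gives the reverse inequality $((x\ra y)\rs y)\ra y\leq x\ra y$; and antisymmetry of $\leq$ (which holds by the quasi-identity \eqref{rpom4}) yields the identity. The passage to $\alg{A}^\dagger=(A,\rs,\ra,1)$ is also legitimate, since the axioms \eqref{rpom1a}--\eqref{rpom2b} and \eqref{rpom4} are symmetric under exchanging the two arrows, so $\alg{A}^\dagger$ is again a pseudo-BCI-algebra with the same underlying order, and the first identity read in $\alg{A}^\dagger$ is indeed the second identity in $\alg{A}$. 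Note, for what it is worth, that the paper offers no proof of this lemma at all: it is quoted from the literature (the references cited in its header), so there is no argument of the authors to compare yours against.

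The genuine problem is one of scope. The statement is the whole of Lemma \ref{L1}, which has twelve items, and your proposal proves only the last one; moreover, the two facts you feed into it, items (3) and (5), are themselves among the eleven items you leave unproven. Deriving later items of a list from earlier ones is perfectly good practice, but then the earlier ones must be established from the axioms. This is doable and not difficult -- for instance, the first half of item (3) follows from \eqref{rpom1a} by setting $x:=1$ and using \eqref{rpom2a}, since then $y\rs ((y\ra z)\rs z)=1$, i.e.\ $y\leq (y\ra z)\rs z$; and the first half of item (5) follows from \eqref{rpom1a} together with \eqref{rpom2b}, because $x\leq y$ means $x\ra y=1$, whence $(y\ra z)\rs (x\ra z)=1\rs ((y\ra z)\rs (x\ra z))=1$. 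So your strategy is completable, but as written the proposal covers only a fraction of the statement and rests on unproven parts of that same statement.
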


Note that the inequalities $y\leq x\ra y$ and $y\leq x\rs y$ hold true only in pseudo-BCK-algebras
(because $y\leq x\ra y$ is equivalent to $x\leq y\rs y=1$).


Let $\alg A$ be a pseudo-BCI-algebra.
The identities \eqref{vl11} of Lemma \ref{L1} entail that the set
\[
I_{\alg{A}} = \{x\in A \mid x\leq1\}, 
\]
which we call the \emph{integral part} of $\alg{A}$, is a subuniverse of $\alg{A}$. Clearly, the subalgebra $\alg{I}_{\alg{A}}=(I_{\alg A},\ra,$ $\rs,1)$ is a pseudo-BCK-algebra; in fact, $\alg{I}_{\alg{A}}$ is the largest subalgebra of $\alg A$ which is a pseudo-BCK-algebra. 
By the \emph{group part} of the pseudo-BCI-algebra $\alg{A}$ we mean the set 
\[
G_{\alg{A}} = \{x\ra 1 \mid x\in A\} = \{x\rs 1 \mid x\in A\}.
\]
Again, by \eqref{vl10} and \eqref{vl11}, $G_{\alg A}$ is a subuniverse of $\alg{A}$. The name ``group part'' is justified by the fact that the subalgebra $\alg{G}_{\alg{A}}=(G_{\alg A},\ra,\rs,1)$ is term equivalent to a group. Indeed, if we put 
\[
g\cdot h = (g\ra 1) \rs h
\]
for $g,h\in G_{\alg{A}}$, then $(G_{\alg{A}},\cdot,1)$ is a group in which 
$g^{-1} = g\ra 1 = g\rs 1$ is the inverse of $g\in G_{\alg{A}}$; the original operations $\ra$ and $\rs$ on $G_{\alg{A}}$ are retrieved by 
\[
g\ra h=h\cdot g^{-1} \quad\text{and}\quad g\rs h=g^{-1}\cdot h,
\]
respectively. This was essentially proved in \cite{D12a,D12b} (also see \cite{Z10a,Z10b}).

If we define $g*h=h\cdot g$, then $(G_{\alg{A}},*,1)$ is a group which is isomorphic to $(G_{\alg{A}},\cdot,1)$ and gives rise to the pseudo-BCI-algebra $\alg{G}_{\alg{A}}^{\dagger}=(G_{\alg{A}},\rs,\ra,1)$, because $g\rs h=g^{-1}\cdot h=h*g^{-1}$ and $g\ra h=h\cdot g^{-1}=g^{-1}*h$.
Consequently, the map $g\in G_{\alg{A}}\mapsto g^{-1}\in G_{\alg{A}}$ is an isomorphism between $\alg{G}_{\alg{A}}$ and $\alg{G}_{\alg{A}}^{\dagger}$ (though $\alg A$ and $\alg A^\dagger$ need not be isomorphic).

The following was independently proved in several papers; see \cite{D12a,D12b}, \cite{KS} or \cite{Z10a,Z10b}. The proof is easy anyway.

\begin{lem}\label{lem-grcast}
For any pseudo-BCI-algebra $\alg A$, $G_{\alg{A}}$ is the set of the maximal elements of $(A,\leq)$. 
For every $x\in A$, the element $g=(x\ra 1)\ra 1$ is the only $g\in G_{\alg{A}}$ such that $x\leq g$. Therefore, $x\in G_{\alg{A}}$ iff $x=(x\ra 1)\ra 1$. 
\end{lem}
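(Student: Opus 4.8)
The plan is to introduce the self-map $\eta(x)=(x\ra1)\ra1$ and to show that it is a closure operator onto $G_{\alg A}$ whose fixed points are precisely the maximal elements of $(A,\le)$; all three assertions then fall out simultaneously.

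First I would record four elementary properties of $\eta$. (i) It is \emph{extensive}: applying \eqref{vl3} with $y=1$ gives $x\le(x\ra1)\rs1$, and since $(x\ra1)\rs1=(x\ra1)\ra1$ by \eqref{vl10}, we get $x\le\eta(x)$. (ii) Each value $\eta(x)$ lies in $G_{\alg A}$ by the very definition of the group part. (iii) It \emph{fixes} $G_{\alg A}$ pointwise: for $g=a\ra1$, the final identity of Lemma~\ref{L1} (with $y=1$), together with \eqref{vl10}, yields $((a\ra1)\ra1)\ra1=a\ra1$, that is, $\eta(g)=g$. Combining (ii) and (iii) already gives the last assertion, namely that $x\in G_{\alg A}$ iff $x=(x\ra1)\ra1$. (iv) It is \emph{monotone}: from $x\le y$ two applications of \eqref{vl5} give first $y\ra1\le x\ra1$ and then $\eta(x)\le\eta(y)$.

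The decisive extra ingredient is that $G_{\alg A}$ is an \emph{antichain} in $(A,\le)$, and this is where I expect the only real obstacle to lie, since it cannot be extracted from the arithmetic of $\eta$ alone (an abstract closure operator need not have maximal fixed points). Here I would invoke the group structure on $G_{\alg A}$ recalled above: the operation is recovered by $g_1\ra g_2=g_2\cdot g_1^{-1}$, so for $g_1,g_2\in G_{\alg A}$ the relation $g_1\le g_2$ means $g_1\ra g_2=1$, i.e.\ $g_2\cdot g_1^{-1}=1$, which forces $g_1=g_2$.

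With these facts in hand the three claims are immediate. For the characterization of maximal elements: if $m$ is maximal then $m\le\eta(m)$ forces $m=\eta(m)\in G_{\alg A}$; conversely, if $g\in G_{\alg A}$ and $g\le h$, then monotonicity and (iii) give $g=\eta(g)\le\eta(h)$ with both endpoints in $G_{\alg A}$, so the antichain property yields $\eta(h)=g$, whence $h\le\eta(h)=g\le h$ and thus $h=g$; therefore $G_{\alg A}$ is exactly the set of maximal elements. For the uniqueness statement, $\eta(x)\in G_{\alg A}$ lies above $x$ by (i)--(ii), and any $g\in G_{\alg A}$ with $x\le g$ satisfies $\eta(x)\le\eta(g)=g$, so $\eta(x)=g$ again by the antichain property; hence $(x\ra1)\ra1$ is the unique element of $G_{\alg A}$ above $x$.
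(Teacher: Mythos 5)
Your proof is correct, but there is nothing in the paper to compare it with: the paper states this lemma without proof, citing earlier papers of Dymek, Zhang and others and remarking that ``the proof is easy anyway,'' so your write-up genuinely supplies the missing details. The closure-operator scaffolding for $\eta(x)=(x\ra 1)\ra 1$ is verified correctly from Lemma~\ref{L1}: extensivity from \eqref{vl3} and \eqref{vl10}, the fixed-point property on $G_{\alg A}$ from the last identity of Lemma~\ref{L1} with $y=1$, and monotonicity from \eqref{vl5}; your isolation of the antichain property of $G_{\alg A}$ as the one ingredient that cannot come from closure-operator formalities alone is exactly right. The only point to flag is that for this step you invoke the group structure on $G_{\alg A}$, which the paper likewise states without proof just before the lemma; this is legitimate within the paper's order of exposition, but a careful reader might worry about circularity, since the cited sources could conceivably establish the group facts \emph{using} maximality. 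The worry is easily dispelled, because the antichain property follows from Lemma~\ref{L1} alone: if $g_1,g_2\in G_{\alg A}$ and $g_1\le g_2$, then $g_2\ra 1\le g_1\ra 1$ by \eqref{vl5}, while $1=(g_1\ra g_2)\ra 1=(g_1\ra 1)\rs(g_2\ra 1)$ by \eqref{vl11} gives $g_1\ra 1\le g_2\ra 1$; antisymmetry yields $g_1\ra 1=g_2\ra 1$, and your fixed-point property (iii) then gives $g_1=(g_1\ra 1)\ra 1=(g_2\ra 1)\ra 1=g_2$. With that substitution your argument becomes fully self-contained, resting on nothing beyond Lemma~\ref{L1}.
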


Analogously, for the integral part we have:

\begin{lem}\label{lem-intcast}
Let $\alg A$ be a pseudo-BCI-algebra. For any $x\in A$, $((x\ra 1)\ra 1)\ra x\in I_{\alg A}$, and hence $x\in I_{\alg A}$ iff $x=((x\ra 1)\ra 1)\ra x$.
\end{lem}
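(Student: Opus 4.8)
The plan is to reduce everything to the elementary fact that an element $a\in A$ lies in $I_{\alg A}$ precisely when $a\le 1$, equivalently (by \eqref{usporadani}) when $a\ra 1=1$. Writing $g=(x\ra 1)\ra 1$ for brevity, I first recall from Lemma~\ref{lem-grcast} that $g$ is the unique maximal element above $x$, so in particular $x\le g$. The whole statement will then follow once I show that $g\ra x\le 1$, i.e.\ that $g\ra x\in I_{\alg A}$.

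This last inequality is immediate from the antitonicity of $\ra$ in its first argument. Indeed, applying \eqref{vl5} to $x\le g$ with $z:=x$ gives $g\ra x\le x\ra x$, and $x\ra x=1$ by \eqref{vl1}. Hence $g\ra x\le 1$, which is exactly the assertion that $((x\ra 1)\ra 1)\ra x\in I_{\alg A}$.

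For the equivalence, the direction $x=g\ra x\Rightarrow x\in I_{\alg A}$ is now free: if $x$ equals $g\ra x$, then $x\in I_{\alg A}$ by what was just proved. Conversely, suppose $x\in I_{\alg A}$, i.e.\ $x\le 1$, so that $x\ra 1=1$ by \eqref{usporadani}; then $g=1\ra 1=1$, and therefore $g\ra x=1\ra x=x$ by \eqref{rpom2a}. I do not anticipate a genuine obstacle here: the proof is a short chain of arithmetic facts, and the only points requiring care are keeping track of which argument of $\ra$ is order-reversing (handled by \eqref{vl5}) and invoking the left-unit law \eqref{rpom2a} for the collapse $g=1$ in the converse direction.
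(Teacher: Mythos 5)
Your proof is correct and follows essentially the same route as the paper's: both deduce $((x\ra 1)\ra 1)\ra x\leq x\ra x=1$ from $x\leq (x\ra 1)\ra 1$ via antitonicity, and both handle the converse by noting that $x\leq 1$ forces $(x\ra 1)\ra 1=1$ so the expression collapses to $1\ra x=x$. The only cosmetic difference is that you cite Lemma~\ref{lem-grcast} for $x\leq (x\ra 1)\ra 1$, which the paper takes as known.
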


\begin{proof}
Since $x\leq (x\ra 1)\ra 1$, it follows $1=x\ra x\geq ((x\ra 1)\ra 1)\ra x$.
If $x\leq 1$, then $((x\ra 1)\ra 1)\ra x=(1\ra 1)\ra x=1\ra x=x$.
\end{proof}

We adopt the notation of \cite{RvA}, but the sets $I_{\alg A}$ and $G_{\alg A}$ can be found in the literature under various names and symbols. 
$I_{\alg A}$ is often called the \emph{pseudo-BCK-part} of $\alg A$ and denoted by $K(\alg A)$, and $G_{\alg A}$ is called the \emph{anti-grouped part} or the \emph{center} of $\alg A$, denoted by $AG(\alg A)$ or $At(\alg A)$, where ``$At$'' comes from ``atoms''. Pseudo-BCI-algebras satisfying $G_{\alg A}=A$ are called \emph{p-semisimple}.

Another important consequence of Lemma \ref{L1} \eqref{vl10} and \eqref{vl11} is that the map 
\begin{equation}\label{gamma}
\gamma\colon x\in A\mapsto x\ra 1\in G_{\alg{A}}
\end{equation}
is a homomorphism from $\alg{A}$ onto $\alg{G}_{\alg{A}}^\dagger$, and 
\begin{equation}\label{delta}
\delta\colon x\in A\mapsto (x\ra 1)\ra 1\in G_{\alg{A}}
\end{equation}
is a homomorphism from $\alg{A}$ onto $\alg{G}_{\alg{A}}$.
We have $\alg{A}/\ker\gamma\cong\alg{G}_{\alg{A}}^\dagger \cong\alg{G}_{\alg{A}} \cong \alg{A}/\ker\delta$ and $\gamma^{-1}(1)=\delta^{-1}(1)=I_{\alg{A}}$, thus $I_{\alg A}$ is always the kernel of a relative congruence of $\alg A$, while $G_{\alg A}$ is not in general (see Section~\ref{sect-filters}).

Given any pseudo-BCK-algebra $\alg B$ and any group $\alg H$ (regarded as a pseudo-BCI-al\-ge\-bra), we can always construct a pseudo-BCI-algebra $\alg A$ such that $\alg I_{\alg A}\cong \alg B$ and $\alg G_{\alg A}\cong \alg H$.
For example, we can take the direct product $\alg A=\alg B\times\alg H$ because $I_{\alg A}=\{(b,1)\mid b\in B\}$ and $G_{\alg A}=\{(1,h)\mid h\in H\}$.
In Section~\ref{sect-direktnisoucin} we characterize those pseudo-BCI-algebras $\alg{A}$ which are isomorphic to the direct product $\alg{I}_{\alg{A}}\times\alg{G}_{\alg{A}} \cong \alg{I}_{\alg{A}}\times\alg{G}_{\alg{A}}^\dagger$.
But there is an easier construction, see \cite{Z13} (in fact, Theorem 4.3 therein presents a construction of a semi-integral residuated po-monoid from a bounded integral residuated po-monoid and a group).

Let $\alg{B}=(B,\ra,\rs,1)$ be a pseudo-BCK-algebra, $(H,\cdot,1)$ be a group such that $B\cap H=\{1\}$, and let $\alg{H}=(H,\ra,\rs,1)$ be the pseudo-BCI-algebra derived from the group, i.e. $g\ra h=h\cdot g^{-1}$ and $g\rs h=g^{-1}\cdot h$.
Let $A=B\cup H$ be equipped with the operations $\ra,\rs$ defined as follows:
\[
x\diamond y=
\begin{cases}
x\diamond y & \text{if } x,y\in B \text{ or } x,y\in H,\\
y & \text{if } x\in B, y\in H,\\
x^{-1} & \text{if } x\in H\setminus\{1\}, y\in B,
\end{cases}
\]
where $\diamond\in\{\ra,\rs\}$.
Then $\alg{A}=(A,\ra,\rs,1)$ is a pseudo-BCI-algebra with $\alg I_{\alg{A}}=\alg B$ and $\alg G_{\alg{A}}=\alg H$. The proof is but a direct inspection of all possible cases.
Note that $\alg{A}$ is not a subalgebra of the direct product $\alg{B}\times\alg{H}$. 


\section{Embedding into the $\{\ra,\rs,1\}$-reducts of residuated po-monoids}
\label{sect-vnoreni}

It is folklore that BCK-algebras are the $\{\ra,1\}$-subreducts of integral residuated commutative po-monoids (see \cite{Fl,OK,Pa}). Similarly, pseudo-BCK-algebras (biresiduation algebras) are the $\{\ra,\rs,1\}$-subreducts of integral residuated po-monoids, which was proved independently by van Alten in \cite{vA06} and by the second author in \cite{JK-cga}. Also see \cite[Theorem 1.2.1]{JK-hab}, the proof is just a ``non-commutative modification'' of the construction of \cite{OK}. In \cite[Theorem 2]{RvA}, Raftery and van Alten proved that BCI-algebras are the $\{\ra,1\}$-subreducts of semi-integral residuated commutative po-monoids, and the aim of this section is to generalize their result to pseudo-BCI-algebras.

In proving that a pseudo-BCK-algebra $\alg{A}$ is isomorphic to a subalgebra of the residuation reduct of an integral residuated po-monoid, one first constructs a certain po-monoid, say $\alg{J}(\alg{A})$, whose identity $\{1\}$ is also its smallest element, and then takes the set of order-filters of this intermediate po-monoid in order to obtain the integral residuated po-monoid into which $\alg{A}$ embeds.
In the case that $\alg{A}$ is a pseudo-BCI-algebra, the first step remains unchanged, but $\{1\}$ is merely a minimal element in the po-monoid $\alg{J}(\alg{A})$, and hence the second step must be modified as in \cite{RvA}.

Let $\alg{M}=(M,\leq,*,e)$ be a po-monoid in which $e$ is a minimal element. 
Let $G_{\alg{M}}$ be the set of the minimal elements of $(M,\leq)$ and suppose that every $a\in M$ exceeds a unique $g\in G_{\alg{M}}$. Suppose further that $(G_{\alg{M}},\cdot,e)$ is a group such that $g\cdot h\leq g*h$ for all $g,h\in G_{\alg{M}}$. 
Note that we do not require that $(G_{\alg{M}},\cdot,e)$ be a subgroup of $(M,*,e)$.

Let $\mathfrak{F}_{\alg{M}}$ be the set of all (non-empty) order-filters $X$ in the poset $(M,\leq)$ with the property that $X\subseteq [g)$\footnote{As usual, for an element $p$ in a poset $(P,\leq)$, $[p)$ denotes the order-filter $\{x\in P\mid x\geq p\}$.} for some $g\in G_{\alg{M}}$. 
Note that $[a)\in \mathfrak{F}_{\alg{M}}$ for every $a\in M$, because there is a unique $g\in G_{\alg{M}}$ with $g\leq a$, and hence the map $a\mapsto [a)$ is an antitone injection from $(M,\leq)$ into $(\mathfrak{F}_{\alg{M}},\subseteq)$.
For any $X,Y\in \mathfrak{F}_{\alg{M}}$, let
\begin{gather*}
X\odot Y = \{a\in M\mid a\geq x*y \text{ for some } x\in X,y\in Y\},\\
X\ra Y = \{a\in M\mid [a)\odot X\subseteq Y\}, \text{ and}\\
X\rs Y = \{a\in M\mid X\odot [a)\subseteq Y\}.
\end{gather*}

\begin{lem}\label{vnoreni1}
If $\alg{M}=(M,\leq,*,e)$ is a po-monoid satisfying the above conditions, then
$\bm{\mathfrak{F}}_{\alg{M}}=(\mathfrak{F}_{\alg{M}},\subseteq,$ $\odot,\ra,\rs,[e))$ is a semi-integral residuated po-monoid.
\end{lem}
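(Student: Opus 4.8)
The organizing observation is that the assignment $\pi\colon a\mapsto\hat a$, where $\hat a$ denotes the unique minimal element of $(M,\leq)$ below $a$, is a monoid homomorphism from $(M,*,e)$ onto $(G_{\alg M},\cdot,e)$. Indeed, for $p,q\in M$ we have $\hat p\cdot\hat q\leq\hat p*\hat q\leq p*q$, and since $\hat p\cdot\hat q\in G_{\alg M}$ is minimal, the uniqueness of the minimal element below $p*q$ forces $\widehat{p*q}=\hat p\cdot\hat q$; also $\hat e=e$ because $e$ is minimal. Now if $X\in\mathfrak F_{\alg M}$ and $X\subseteq[g)$, then every $x\in X$ satisfies $\hat x=g$ (as $g\leq x$ with $g$ minimal gives $g=\hat x$), so $g$ is uniquely determined by $X$; I write $g_X:=g$, and note $[g_X)=\pi^{-1}(g_X)$. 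The plan is to use $\pi$ and the elements $g_X$ to control into which ``fibre'' $[g)$ each of $\odot,\ra,\rs$ sends its output, and then to verify the monoid and residuation laws.

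First I would check closure of the three operations. That $X\odot Y$ is a non-empty order-filter is immediate, and from $x*y\geq g_X*g_Y\geq g_X\cdot g_Y$ one gets $X\odot Y\subseteq[g_X\cdot g_Y)$, so $g_{X\odot Y}=g_X\cdot g_Y$. For $\ra$, the key reduction is $[a)\odot X=\bigcup_{x\in X}[a*x)$, so that
\[
X\ra Y=\{a\in M\mid a*x\in Y\text{ for all }x\in X\};
\]
this is clearly an up-set, and if $a$ lies in it then, choosing any $x\in X$, the relation $a*x\in Y$ gives $\hat a\cdot g_X=g_Y$, i.e.\ $\hat a=g_Y\cdot g_X^{-1}$, whence $X\ra Y\subseteq[g_Y\cdot g_X^{-1})$.

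The main obstacle is to show that $X\ra Y$ (and symmetrically $X\rs Y$) is non-empty, so that it is a genuine member of $\mathfrak F_{\alg M}$. Here I would exploit the group inverse: fix any $y_0\in Y$ and put $a:=y_0*g_X^{-1}$. For each $x\in X$ we have $g_X^{-1}*x\geq g_X^{-1}*g_X\geq g_X^{-1}\cdot g_X=e$ (using $g\cdot h\leq g*h$), hence by associativity and monotonicity $a*x=y_0*(g_X^{-1}*x)\geq y_0*e=y_0\in Y$, and since $Y$ is an up-set, $a*x\in Y$. Thus $a\in X\ra Y$. (In the integral case $g_X=e$ this degenerates to the trivial remark that $Y\subseteq X\ra Y$.) The symmetric choice $a:=g_X^{-1}*y_0$ shows $X\rs Y\neq\emptyset$. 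Together with the filter property and the bound above, this yields $X\ra Y,X\rs Y\in\mathfrak F_{\alg M}$.

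It then remains to verify the (quasi-)equational content, all of which is routine. Associativity of $\odot$ follows from that of $*$, and $[e)$ is the identity since $a\in[e)$ gives $x*a\geq x$ and $a*x\geq x$, whence $X\odot[e)=[e)\odot X=X$. The residuation law is essentially a currying identity: because $X\odot Y=\bigcup_{x\in X}\bigl([x)\odot Y\bigr)$, one has $X\subseteq Y\ra Z$ iff $[x)\odot Y\subseteq Z$ for every $x\in X$ iff $X\odot Y\subseteq Z$, and dually $X\subseteq Y\rs Z$ iff $Y\odot X\subseteq Z$. Finally, semi-integrality holds because $[e)$ is a maximal element of $(\mathfrak F_{\alg M},\subseteq)$: if $X\supseteq[e)$ then $e\in X$, so $g_X=\hat e=e$ and $X\subseteq[g_X)=[e)$, forcing $X=[e)$.
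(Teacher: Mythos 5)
Your proposal is correct, and on the two points the paper actually argues it takes the same route: closure of $\odot$ via $x*y\geq g_X*g_Y\geq g_X\cdot g_Y$, and the fibre bound $X\ra Y\subseteq[g_Y\cdot g_X^{-1})$ via uniqueness of the minimal element below $a*x$ (your homomorphism $\pi\colon a\mapsto\hat a$ is just a tidy repackaging of that uniqueness argument, and the reduction $X\ra Y=\{a\mid a*x\in Y\text{ for all }x\in X\}$ is sound). Where you genuinely go beyond the paper is the non-emptiness of $X\ra Y$ and $X\rs Y$: the paper's sketch jumps from ``$X\ra Y\subseteq[h\cdot g^{-1})$'' to ``$X\ra Y\in\mathfrak{F}_{\alg M}$'', silently leaving this to the reader (following Raftery--van Alten), even though an up-set contained in $[h\cdot g^{-1})$ could a priori be empty, and a residuated po-monoid needs $\ra,\rs$ to be total operations. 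Your explicit witness $y_0*g_X^{-1}\in X\ra Y$ (using $g_X^{-1}*x\geq g_X^{-1}*g_X\geq g_X^{-1}\cdot g_X=e$ and monotonicity of $*$) closes exactly this gap, and it correctly degenerates to $Y\subseteq X\ra Y$ in the integral case; the remaining verifications (monoid laws, residuation via $X\odot Y=\bigcup_{x\in X}[x)\odot Y$, maximality of $[e)$) are routine and you dispatch them correctly. In short: same skeleton as the paper, but your write-up is self-contained where the paper defers to \cite{RvA}.
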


\begin{proof}
As in \cite{RvA}, we leave the details to the reader, we only make two remarks.
First, if $X\subseteq [g)$ and $Y\subseteq [h)$ where $g,h\in G_{\alg{M}}$, then $x*y\geq g*h\geq g\cdot h\in G_{\alg{M}}$ for all $x\in X$ and $y\in Y$, whence $X\odot Y\in\mathfrak{F}_{\alg{M}}$.
Second, if $a\in X\ra Y$ where $a\geq f\in G_{\alg{M}}$, then $[a)\odot X\subseteq Y$ and so $a*x\geq h$ for any $x\in X$, and at the same time, $a*x\geq f\cdot g$. Since there is a unique minimal element below $a*x$, it follows that $h=f\cdot g$, whence $a\geq f=h\cdot g^{-1}\in G_{\alg{M}}$. This shows that $X\ra Y\subseteq [h\cdot g^{-1})$, and so $X\ra Y\in\mathfrak{F}_{\alg{M}}$. Analogously, $X\rs Y\in\mathfrak{F}_{\alg{M}}$.
\end{proof}

Now, let $\alg{A}=(A,\ra,\rs,1)$ be a pseudo-BCI-algebra. As in Section \ref{sect-uvod}, $G_{\alg{A}}$ denotes the group part of $\alg{A}$. 
Let $W_A$ be the set of all non-empty words $\alpha=a_1\dots a_n$ over the set $A$. For any such $\alpha\in W_A$ and $x\in A$ we write
\begin{align*}
\alpha \ra x &= a_1\ra (\dots \ra (a_n\ra x)\dots),\\
\alpha \rs x &= a_n\rs (\dots \rs (a_1\rs x)\dots).
\end{align*}
In view of Lemma \ref{L1} \eqref{vl4} and \eqref{vl6}, we have
$\alpha \ra x=1$ iff $\alpha \rs x=1$, and also $\alpha \ra (\beta \rs x) = \beta \rs (\alpha \ra x)$ for all $\alpha,\beta\in W_A$.
For any $\alpha\in W_A$, let
\[
J(\alpha) = \{x\in A\mid \alpha \ra x=1\},
\]
and let $J(\alg{A})=\{J(\alpha)\mid \alpha\in W_A\}$.
Note that $J(a)=[a)$ for any $a\in A$, and hence $J(g)=\{g\}$ for any $g\in G_{\alg{A}}$.
Also, $a\leq b$ iff $J(b)\subseteq J(a)$ for all $a,b\in A$, whence the map $a\mapsto J(a)$ is an antitone injection from $(A,\leq)$ into $(J(\alg{A}),\subseteq)$.
Further, let 
\[
J(\alpha)*J(\beta)=J(\alpha\beta);
\]
thus $J(\alpha)*J(\beta)=\{x\in A\mid \alpha\ra (\beta\ra x)=1\}$.

\begin{lem}\label{vnoreni2}
For any pseudo-BCI-algebra $\alg{A}$, $\alg{J}(\alg{A}) = (J(\alg{A}),\subseteq,*,\{1\})$ is a po-monoid such that $J(g)=\{g\}$ with $g\in G_{\alg{A}}$ are the minimal elements of $(J(\alg{A}),\subseteq)$.
Every $J(\alpha) \in J(\alg{A})$ contains a unique $g\in G_{\alg{A}}$, namely, $g=(\alpha\ra 1)\ra 1$. 
Moreover, for all $\alpha,\beta\in W_A$ and $g,h\in G_{\alg{A}}$, if $g\in J(\alpha)$ and $h\in J(\beta)$, then $g\cdot h\in J(\alpha)*J(\beta)$, where $g\cdot h$ is calculated in the group $(G_{\alg A},\cdot,1)$.
\end{lem}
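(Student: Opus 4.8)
The plan is to make every part of the statement hinge on one comparison lemma describing inclusion of the sets $J(\alpha)$ by means of the arrow. Concretely, I would first prove that for all $\alpha,\beta\in W_A$,
\[
J(\alpha)\subseteq J(\beta)\iff \alpha\ra x\leq\beta\ra x\ \text{ for all } x\in A.
\]
The nontrivial implication is left to right. Fix $x$ and put $u=\alpha\ra x$. Using the word-level form of Lemma~\ref{L1}\eqref{vl6} recorded before the lemma, namely $\alpha\ra(u\rs x)=u\rs(\alpha\ra x)$, together with Lemma~\ref{L1}\eqref{vl1}, we get $\alpha\ra(u\rs x)=u\rs u=1$; thus $u\rs x\in J(\alpha)\subseteq J(\beta)$, and the same identity applied to $\beta$ gives $u\rs(\beta\ra x)=1$, i.e. $u=\alpha\ra x\leq\beta\ra x$. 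The converse is immediate since $1$ is maximal. I expect this lemma to be the crux of the whole proof, because it is precisely what makes the product well defined; the rest is bookkeeping.

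With the lemma available, the po-monoid axioms fall out quickly. Well-definedness and two-sided monotonicity of $*$ are the same statement: if $J(\alpha)\subseteq J(\alpha')$ and $J(\beta)\subseteq J(\beta')$, then for every $x$
\[
\alpha\ra(\beta\ra x)\leq\alpha'\ra(\beta\ra x)\leq\alpha'\ra(\beta'\ra x),
\]
the first step by the lemma applied to the element $\beta\ra x$, the second by isotonicity of $\ra$ in its right argument (Lemma~\ref{L1}\eqref{vl9}) and $\beta\ra x\leq\beta'\ra x$; hence $J(\alpha\beta)\subseteq J(\alpha'\beta')$. Taking inclusions both ways yields that $J(\alpha)*J(\beta)=J(\alpha\beta)$ depends only on $J(\alpha),J(\beta)$. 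Associativity is inherited from concatenation of words, and $\{1\}=J(1)$ (because $1\ra x=x$ by \eqref{rpom2a}) is the identity since $J(1\alpha)=J(\alpha)=J(\alpha 1)$. Finally, for $g\in G_{\alg A}$ we have $J(g)=[g)=\{g\}$ as $g$ is maximal (Lemma~\ref{lem-grcast}); these singletons are clearly minimal in $(J(\alg A),\subseteq)$, and the next step shows they are the only minimal elements.

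For the unique group element I would separate existence from the explicit value. Existence: set $p=\alpha\ra 1$ and $g=(\alpha\ra 1)\ra 1=p\ra 1=p\rs 1\in G_{\alg A}$, where the last equality is Lemma~\ref{L1}\eqref{vl10}. Then the word-level \eqref{vl6} and \eqref{vl1} give
\[
\alpha\ra g=\alpha\ra(p\rs 1)=p\rs(\alpha\ra 1)=p\rs p=1,
\]
so $g\in J(\alpha)$. For uniqueness and the formula I would apply the homomorphism $\delta$ of \eqref{delta}, which fixes every element of $G_{\alg A}$ (Lemma~\ref{lem-grcast}) and satisfies $u\ra v=v\cdot u^{-1}$ on $G_{\alg A}$: if $g'\in G_{\alg A}\cap J(\alpha)$ then $\alpha\ra g'=1$, and a routine group computation turns $\delta(\alpha\ra g')=\delta(1)=1$ into $g'=\delta(a_1)\cdots\delta(a_n)$ in $(G_{\alg A},\cdot,1)$. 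Hence the only group element of $J(\alpha)$ is $g=(\alpha\ra 1)\ra 1=\delta(a_1)\cdots\delta(a_n)$.

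The moreover part is then immediate from this formula. Writing $\alpha=a_1\cdots a_n$ and $\beta=b_1\cdots b_m$, the unique group element of the word $\alpha\beta$ is
\[
\delta(a_1)\cdots\delta(a_n)\cdot\delta(b_1)\cdots\delta(b_m)=g\cdot h,
\]
where $g\in G_{\alg A}\cap J(\alpha)$ and $h\in G_{\alg A}\cap J(\beta)$ are the (forced) group elements. Therefore $g\cdot h\in J(\alpha\beta)=J(\alpha)*J(\beta)$, as required.
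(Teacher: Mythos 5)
Your proof is correct, but it takes a genuinely different route from the paper's. The paper proves only the last two claims, and does so by direct pseudo-BCI arithmetic: for uniqueness it computes $1=\alpha\ra g=\alpha\ra((g\ra 1)\rs 1)=(g\ra 1)\rs(\alpha\ra 1)$, deduces $g\ra 1\leq\alpha\ra 1$, and uses antitonicity plus maximality to conclude $g=(\alpha\ra 1)\ra 1$; for the ``moreover'' clause it verifies
$\alpha\ra(\beta\ra(g\cdot h))=\alpha\ra[(g\ra 1)\rs(\beta\ra h)]=\alpha\ra((g\ra 1)\rs 1)=\alpha\ra g=1$
directly from $g\cdot h=(g\ra 1)\rs h$ and the word-level form of Lemma~\ref{L1}\eqref{vl6}; the po-monoid part is dismissed as straightforward. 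You instead (i) prove the po-monoid part in full via your comparison lemma $J(\alpha)\subseteq J(\beta)$ iff $\alpha\ra x\leq\beta\ra x$ for all $x$, which in particular settles the well-definedness of $*$ --- the operation is defined through representative words and distinct words can name the same set, a point the paper's ``straightforward'' conceals --- and (ii) you route both uniqueness and the ``moreover'' clause through the homomorphism $\delta$ of \eqref{delta}, which fixes $G_{\alg A}$ pointwise by Lemma~\ref{lem-grcast}, obtaining the sharper formula that the unique group element of $J(a_1\dots a_n)$ is $\delta(a_1)\cdots\delta(a_n)$; the compatibility of $*$ with the group product then drops out with no further computation. The trade-off: the paper's argument is shorter and stays entirely inside Lemma~\ref{L1}, while yours costs the comparison lemma and the (paper-supplied) fact that $\delta$ is a homomorphism, but in exchange it documents the omitted well-definedness and gives a structural explanation of the last claim --- the group element of a concatenation is the product of the group elements of the factors.
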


\begin{proof}
Again, the proof is straightforward. We only prove the last two statements.
If $g\in J(\alpha)$ where $g\in G_{\alg{A}}$, then $1=\alpha\ra g=\alpha\ra ((g\ra 1)\rs 1)=(g\ra 1)\rs (\alpha\ra 1)$, so $g\ra 1\leq \alpha\ra 1$ whence $g=(g\ra 1)\ra 1\geq (\alpha\ra 1)\ra 1$. Since $g$ and $(\alpha\ra 1)\ra 1$ are maximal elements, $g=(\alpha\ra 1)\ra 1$.
Furthermore, if $g\in J(\alpha)$ and $h\in J(\beta)$ where $g,h\in G_{\alg{A}}$, then 
$\alpha\ra (\beta\ra (g\cdot h))=\alpha\ra [\beta\ra ((g\ra 1)\rs h)]=\alpha\ra [(g\ra 1)\rs (\beta\ra h)]=\alpha\ra ((g\ra 1)\rs 1)=\alpha\ra g=1$.
\end{proof}

Since $(G_{\alg{A}},\cdot,1)$ is a group, it follows that also $(G_{\alg{J}(\alg{A})},\cd,\{1\})$ is a group, where $G_{\alg{J}(\alg{A})}=\{\{g\}\mid g\in G_{\alg{A}}\}$ and $\{g\}\cdot\{h\}=\{g\cdot h\}$.\footnote{In general, $\{g\cdot h\}\neq\{g\}*\{h\}$ for $g,h\in G_{\alg A}$ because it can happen that $g\ra (h\ra x)=1$, but $(g\cdot h)\ra x\neq 1$, or vice versa.} 
Moreover, by the last statement of Lemma \ref{vnoreni2} we have $\{g\}\cdot\{h\} \subseteq \{g\}*\{h\}$, thus the po-monoid $\alg{J}(\alg{A})=(J(\alg{A}),\subseteq,*,\{1\})$ satisfies the assumptions of Lemma \ref{vnoreni1}, and hence $\bm{\mathfrak{F}}_{\alg{J}(\alg{A})}=(\mathfrak{F}_{\alg{J}(\alg{A})},\subseteq,\odot,\ra,\rs,[\{1\}))$ is a semi-integral residuated po-monoid.
If $\alg{A}$ is a pseudo-BCK-algebra, then $\bm{\mathfrak{F}}_{\alg{J}(\alg{A})}$ is integral.

Given $x\in A$, we have $J(\alpha)\in [J(x))$ iff $J(x)\subseteq J(\alpha)$ iff $x\in J(\alpha)$, 
whence the composite injection $A\to J(\alg{A})\to \mathfrak{F}_{\alg{J}(\alg{A})}$ is given by
\[
x\mapsto J(x)\mapsto [J(x))=\{J(\alpha)\in J(\alg{A})\mid x\in J(\alpha)\}.
\]
We conclude:

\begin{thm}
For any pseudo-BCI-algebra $\alg{A}=(A,\ra,\rs,1)$, the map 
\[
x\in A \mapsto \{J(\alpha)\in J(\alg{A})\mid x\in J(\alpha)\} \in \mathfrak{F}_{\alg{J}(\alg{A})}
\]
is an embedding of $\alg{A}$ into the pseudo-BCI-algebra $(\mathfrak{F}_{\alg{J}(\alg{A})},\ra,\rs,[\{1\}))$.
Consequently, pseudo-BCI-algebras {\upshape(}resp. pseudo-BCK-algebras{\upshape)} are the $\{\ra,\rs,1\}$-subreducts of se\-mi-integral {\upshape(}resp. integral{\upshape)} residuated po-mo\-noids. 
\end{thm}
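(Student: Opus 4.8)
The plan is to show that the displayed map, which I will call $\phi\colon x\mapsto[J(x))$, is an injective homomorphism of $\alg A$ into the $\{\ra,\rs,1\}$-reduct of $\bm{\mathfrak{F}}_{\alg{J}(\alg A)}$, and then to read off the ``consequently'' clause from facts already recorded in Section~\ref{sect-uvod}. Since $\phi$ is the composite of the two antitone injections $a\mapsto J(a)$ (of $(A,\leq)$ into $(J(\alg A),\subseteq)$) and $P\mapsto[P)$ (of $(J(\alg A),\subseteq)$ into $(\mathfrak{F}_{\alg J(\alg A)},\subseteq)$), it is automatically an order embedding, so injectivity is essentially free: from $x\in J(x)$ (by Lemma~\ref{L1}\eqref{vl1}) one sees that $\phi(x)=\phi(y)$ forces $x\leq y$ and $y\leq x$, hence $x=y$. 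Preservation of the constant is the observation that $J(1)=\{1\}$ is the identity of $\alg J(\alg A)$, so $\phi(1)=[J(1))=[\{1\})$ is the monoid identity of $\bm{\mathfrak{F}}_{\alg J(\alg A)}$.

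The technical heart is the preservation of $\ra$ and $\rs$, and the one preliminary fact I would isolate first is that principal order-filters multiply to principal order-filters: for all $P,Q\in J(\alg A)$,
\[
[P)\odot[Q)=[P*Q).
\]
This is immediate from the definition of $\odot$ together with the fact that $*$ is order-preserving on both sides, and it lets me collapse one layer of the construction. In particular, for $c=J(\gamma)$ one has $[J(\gamma))\odot[J(a))=[J(\gamma)*J(a))=[J(\gamma a))$, where $\gamma a$ denotes the concatenated word.

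With this the residual computations unwind cleanly. For $\ra$ I would compute
\[
\phi(a)\ra\phi(b)=\{J(\gamma)\mid[J(\gamma))\odot[J(a))\subseteq[J(b))\}=\{J(\gamma)\mid J(b)\subseteq J(\gamma a)\},
\]
and then rewrite $J(b)\subseteq J(\gamma a)$ as $b\in J(\gamma a)$, i.e. $(\gamma a)\ra b=1$, i.e. $\gamma\ra(a\ra b)=1$, i.e. $a\ra b\in J(\gamma)$; the last condition says exactly $J(\gamma)\in[J(a\ra b))$, so $\phi(a)\ra\phi(b)=[J(a\ra b))=\phi(a\ra b)$. The case of $\rs$ is parallel, and here I expect the only genuine subtlety: the residual $\phi(a)\rs\phi(b)$ leads to the condition $(a\gamma)\ra b=1$, which I would convert through Lemma~\ref{L1}\eqref{vl4} (so that each $J(\delta)$ is described equally well by $\ra$ or by $\rs$) into $(a\gamma)\rs b=\gamma\rs(a\rs b)=1$, that is $a\rs b\in J(\gamma)$, giving $\phi(a)\rs\phi(b)=\phi(a\rs b)$. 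Verifying this passage between the two arrows carefully is the main obstacle; everything else is routine order-filter bookkeeping across the three levels $A$, $J(\alg A)$, $\mathfrak{F}_{\alg J(\alg A)}$.

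For the concluding statement I would argue that the two classes coincide. One inclusion is the embedding just established, combined with Lemma~\ref{vnoreni1}: $\bm{\mathfrak{F}}_{\alg J(\alg A)}$ is a semi-integral residuated po-monoid, and it is integral precisely when $\alg A$ is a pseudo-BCK-algebra. The reverse inclusion is already noted in Section~\ref{sect-uvod}, namely that every $\{\ra,\rs,1\}$-subreduct of a semi-integral (resp. integral) residuated po-monoid is a pseudo-BCI-algebra (resp. pseudo-BCK-algebra). Hence pseudo-BCI-algebras are exactly the $\{\ra,\rs,1\}$-subreducts of semi-integral residuated po-monoids, and likewise in the integral, pseudo-BCK case.
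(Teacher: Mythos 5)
Your proposal is correct and takes essentially the same route as the paper: it builds on Lemmas \ref{vnoreni1} and \ref{vnoreni2} and the same composite map $x\mapsto [J(x))$, and your computations ($[P)\odot[Q)=[P*Q)$, the rewriting of $J(b)\subseteq J(\gamma a)$ as $b\in J(\gamma a)$, and the passage between $\ra$ and $\rs$ via Lemma \ref{L1}\eqref{vl4}) are exactly the details that the paper, following Raftery and van Alten, leaves to the reader. No gaps; the ``consequently'' clause is handled as the paper intends, via the observation in Section \ref{sect-uvod} that subreducts of (semi-)integral residuated po-monoids are pseudo-BCK- (pseudo-BCI-) algebras.
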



\section{Relative congruences, filters and prefilters}
\label{sect-filters}

First, we recall some general facts; see e.g. \cite{BR99,BR08}.
Let $\var{Q}$ be a quasivariety of algebras of a given language with a constant $1$. A congruence $\theta$ of an algebra $\alg A\in\var Q$ is a \emph{relative congruence} (or a \emph{$\var Q$-congruence}) if the quotient algebra $\alg A/\theta$ belongs to $\var Q$. The set of all relative congruences of $\alg A$ is denoted by $\conq Q A$; ordered by set-inclusion, it forms an algebraic lattice $\bconq Q A$.
If the lattice of (relative) congruences of every algebra $\alg A\in\var Q$ is modular or distributive, then $\var Q$ is said to be (\emph{relatively}) \emph{congruence modular} or (\emph{relatively}) \emph{congruence distributive}, respectively.

Furthermore, $\var Q$ is (\emph{relatively}) \emph{$1$-regular} if distinct (relative) congruences of algebras $\alg A\in\var Q$ have distinct kernels; in other words, if $\phi,\psi$ are (relative) congruences of $\alg{A}$ with $[1]_\phi = [1]_\psi$, then $\phi=\psi$. 
It is known that $\var Q$ is relatively $1$-regular if and only if there exist binary terms $d_1,\dots,d_n$ such that $\var Q$ satisfies 
\[\textstyle
\bigwedge_{i=1}^{n} d_i(x,y) = 1 \quad\Leftrightarrow\quad x = y.
\]
In this case, every relative congruence $\theta\in\conq Q A$ is determined by its kernel $[1]_\theta$ by
\[
(a,b)\in\theta \quad\text{iff}\quad d_i(a,b)\in [1]_\theta \text{ for all } i=1,\dots,n,
\]
and the kernels of relative congruences of $\alg A$ form an algebraic lattice which is isomorphic to $\bconq{Q}{A}$; the (inverse) isomorphism is given by $\theta\mapsto [1]_\theta$.
It is also known that a $1$-regular variety is always congruence modular, but a relatively $1$-regular quasivariety need not be relatively congruence modular.

Now, let $\var{PBCK}$ and $\var{PBCI}$ be respectively the quasivariety of pseudo-BCK-al\-ge\-bras and the quasivariety of pseudo-BCI-algebras. Both $\var{PBCK}$ and $\var{PBCI}$ are relatively $1$-regular (but not $1$-regular), as witnessed by the terms $d_1(x,y)=x\diamond y$ and $d_2(x,y)=y\diamond x$ where $\diamond$ is either of the arrows $\ra$ and $\rs$. Hence every relative congruence $\theta$ is determined by its kernel $[1]_\theta$ by: 
$(a,b)\in\theta$ iff $a\diamond b,b\diamond a\in [1]_\theta$.

An internal characterization of the kernels of relative congruences of pseudo-BCI-algebras, similar to that for pseudo-BCK-algebras, can be found in \cite{D14} (see \cite{HK,JK-hab} for pseudo-BCK-algebras). Given a pseudo-BCI-algebra $\alg A$, a subset $F\subseteq A$ is the kernel $[1]_\theta$ of a (unique) relative congruence $\theta\in\conq{PBCI}{A}$ if and only if 
\begin{enumerate}[\indent (i)]
\item $1\in F$;
\item if $a,a\ra b\in F$, then $b\in F$;
\item if $a\in F$, then $a\ra 1\in F$; and
\item for all $a,b\in A$, $a\ra b\in F$ iff $a\rs b\in F$.
\end{enumerate}
In \cite{D14}, such subsets are called ``closed compatible deductive systems'', but we will call them ``filters'' (see below).
It is not hard to show that in (ii) we can use $\rs$ in place of $\ra$ and that (iv) is equivalent to the condition
\begin{enumerate}[\indent (i)]\setcounter{enumi}{4}
\item $(b\ra a)\ra a, (b\rs a)\rs a\in F$ for all $a\in A$, $b\in F$.
\end{enumerate}
Of course, (iii) is redundant if $\alg A$ is a pseudo-BCK-algebra, and (iv) or (v) is redundant if $\alg A$ is a BCI-algebra.

There are quite a few papers devoted to the subsets satisfying (i) and (ii); they are called ``deductive systems'' \cite{D14}, ``filters'' \cite{ZJ14} or ``pseudo-BCI-filters'' \cite{Z10b},
and ``ideals'' or ``pseudo-BCI-ideals'' when the dual presentation of pseudo-BCI-algebras is used \cite{D12a,JKN,LP}. The adjective ``closed'' is added when also the condition (iii), which is equivalent to being a subalgebra, is satisfied \cite{D14,D12a}.

Our terminology is hopefully simpler: Given a pseudo-BCI-algebra $\alg A$, we say that $F\subseteq A$ is
\begin{enumerate}[\indent\upshape (1)]
\item a \emph{filter} of $\alg A$ if $F$ is the kernel $[1]_\theta$ of some $\theta\in\conq{PBCI}{A}$, i.e., if $F$ satisfies the above conditions (i)--(iv), or equivalently, the conditions (i)--(iii) and (v);
\item a \emph{prefilter} of $\alg A$ if it satisfies (i)--(iii).
\end{enumerate}
Here, we require that also (iii) be satisfied because when a group is regarded as a pseudo-BCI-algebra, then the subgroups are precisely the prefilters.

We let $\fil A$ and $\pfil A$ denote respectively the set of filters and the set of prefilters of $\alg A$; it is obvious that with respect to set-inclusion they form algebraic lattices, $\bfil A$ and $\bpfil A$.
We know that $\bfil A \cong \bconq{PBCI}{A}$ under the mutually inverse maps $F\mapsto\theta_F$ and $\theta\mapsto [1]_\theta$ where $\theta_F$ is given by
\[
(a,b)\in\theta_F \quad\text{iff}\quad a\ra b,b\ra a\in F \quad\text{iff}\quad a\rs b,b\rs a\in F.
\]
Dymek \cite{D14} proved somewhat more; he proved that
if $F$ is a ``compatible deductive system'' not necessarily ``closed'' (i.e., if $F$ satisfies (i), (ii) and (iv)), then $\theta_F\in\con A$ and $F\subseteq [1]_{\theta_F}$, with equality exactly if $F$ is ``closed''.

It is known that the quasivariety of BCK-algebras $\var{BCK}$ is relatively congruence distributive, while the quasivariety of BCI-algebras $\var{BCI}$ is only relatively congruence modular (see \cite{BR99,BR08,RvA}).
It comes as no surprise that likewise $\var{PBCK}$ is relatively congruence distributive; more directly, for any pseudo-BCK-algebra $\alg A$, $\bpfil A$ is a distributive lattice and $\bfil A$ is its complete sublattice (see \cite[Corollary 2.1.12, 2.2.9]{JK-hab} and \cite{HK}).
However, $\var{PBCI}$ can be relatively congruence modular at best, because it contains a subclass which is term equivalent to groups, so the filter lattice $\bfil A$ need not be distributive.
Note that---since subgroups correspond to prefilters---the prefilter lattice $\bpfil A$ need not be even modular.

In what follows, we prove that $\var{PBCI}$ is indeed relatively congruence modular. We actually prove that this holds true for any relatively $1$-regular quasivariety which has a ``nice description'' of the kernels of relative congruences, by which we mean a description by means of the so-called ``ideal terms'' (see \cite{GU,BR99}).

As before, let $\var Q$ be a quasivariety whose language has a constant $1$. 
An \emph{ideal term}\footnote{Since the concept of an ideal term as well as that of an ideal obviously depends on the choice of $\var Q$, we should more accurately speak of $\var Q$-ideal terms and $\var Q$-ideals.} (in variables $y_1,\dots,y_n$) is a term $t(x_1,\dots,x_m,y_1,\dots,y_n)$ in the language of $\var Q$ such that $\var{Q}$ satisfies the identity
\[
t(x_1,\dots,x_m,1,\dots,1) = 1.
\]
An \emph{ideal}\footnotemark[\value{footnote}]{} of an algebra $\alg A\in\var Q$ is a non-empty subset $I$ of $A$ which is closed under all ideal terms, in the sense that for every ideal term $t(x_1,\dots,x_m,y_1,\dots,y_n)$ one has $t(a_1,\dots,a_m,b_1,\dots,b_n)\in I$ for all $a_1,\dots,a_m\in A$ and $b_1,\dots,b_n\in I$.
Ordered by set-inclusion, the ideals of $\alg A$ form an algebraic lattice $\bidq Q A$. It is easy to see that the ideal generated by $\emptyset\neq S\subseteq A$ consists precisely of the elements $t(a_1,\dots,a_m,b_1,\dots,b_n)$ where $t(x_1,\dots,x_m,y_1,\dots,y_n)$ is an ideal term and $a_1,\dots,a_m\in A$, $b_1,\dots,b_n\in S$.

$\var{Q}$ is said to be \emph{ideal determined} if for every $\alg{A}\in\var{Q}$, every ideal $I$ of $\alg{A}$ is the kernel $[1]_\theta$ of a unique congruence $\theta\in\con{A}$, in which case the map $\theta\mapsto [1]_\theta$ is an isomorphism between the lattices $\bcon A$ and $\bidq Q A$.

Analogously, we will say that $\var Q$ is \emph{relatively ideal determined} provided that every ideal $I$ of $\alg{A}$ is the kernel of a unique relative congruence $\theta\in\conq{Q}{A}$, for all $\alg{A}\in\var{Q}$. If $\var Q$ has this property, then $\bconq{Q}{A} \cong \bidq {Q}{A}$ for all $\alg{A}\in\var{Q}$.

By \cite{GU}, every ideal determined variety is congruence modular. Following the proof given in \cite{GU}, we prove that every relatively ideal determined quasivariety is relatively congruence modular.

Recalling the conditions (i)--(iii) and (v), one can easily find a finite basis of ideal terms for pseudo-BCK- and pseudo-BCI-algebras:

\begin{prop}\label{prop-idealterms}
Let $\alg A$ be a pseudo-BCI-algebra. For any $\emptyset\neq F\subseteq A$, the following are equivalent:
\begin{enumerate}[\indent\upshape (1)]
\item $F$ is a filter of $\alg A$;
\item $F$ is an ideal of $\alg A$, i.e., $F$ is closed under all $\var{PBCI}$-ideal terms;
\item $1\in F$ and $F$ is closed under the following ideal terms:
\begin{align*}
t_1(x,y_1,y_2) &= (y_1\ra (y_2\ra x))\ra x,\\
t_2(x,y) &= (y\rs x)\rs x,\\
t_3(y) &= y\ra 1.
\end{align*}
\end{enumerate}
\end{prop}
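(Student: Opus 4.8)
The plan is to prove the cycle $(2)\Rightarrow(3)\Rightarrow(1)\Rightarrow(2)$. The implication $(2)\Rightarrow(3)$ is immediate: one only checks that $t_1,t_2,t_3$ are genuine $\var{PBCI}$-ideal terms, i.e.\ that they collapse to $1$ when every $y$-variable is set to $1$. Using \eqref{rpom2a}, \eqref{rpom2b} and \eqref{vl1} one gets $t_1(x,1,1)=(1\ra(1\ra x))\ra x=x\ra x=1$, $t_2(x,1)=(1\rs x)\rs x=x\rs x=1$ and $t_3(1)=1\ra 1=1$; and every nonempty ideal contains $1$ since $b\ra b=1$ for $b\in F$.

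For $(3)\Rightarrow(1)$ I would check the filter conditions (i)--(iii) and (v) by substituting into the three terms, keeping in mind that the $y$-variables range over $F$ while the $x$-variables range over all of $\alg A$. Condition (i) is given and (iii) is precisely closure under $t_3$. The three crucial evaluations are
\begin{align*}
t_1(b,a\ra b,a) &= \big((a\ra b)\ra(a\ra b)\big)\ra b = 1\ra b = b,\\
t_1(a,b,1) &= \big(b\ra(1\ra a)\big)\ra a = (b\ra a)\ra a,\\
t_2(a,b) &= (b\rs a)\rs a,
\end{align*}
which hold by \eqref{vl1} and \eqref{rpom2a}. The first yields modus ponens (ii): if $a,a\ra b\in F$, then $b=t_1(b,a\ra b,a)\in F$. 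The second and third, taken for $b\in F$ and arbitrary $a\in A$, yield the two halves of (v). As (i)--(iii) together with (v) characterise filters, this proves that $F$ is a filter.

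The closing implication $(1)\Rightarrow(2)$ is the general fact that kernels of relative congruences are ideals. If $F$ is a filter, then $F=[1]_{\theta_F}$ with $\theta_F\in\conq{PBCI}{A}$ and $\alg A/\theta_F\in\var{PBCI}$. For any ideal term $t(x_1,\dots,x_m,y_1,\dots,y_n)$, any $a_1,\dots,a_m\in A$ and any $b_1,\dots,b_n\in F$, the $b_j$ are $\theta_F$-related to $1$, so $t(a_1,\dots,a_m,b_1,\dots,b_n)$ is $\theta_F$-related to $t(a_1,\dots,a_m,1,\dots,1)$; since the defining identity $t(x_1,\dots,x_m,1,\dots,1)=1$ holds in the quotient, the latter is $\theta_F$-related to $1$, so $t(a_1,\dots,a_m,b_1,\dots,b_n)\in[1]_{\theta_F}=F$, as required.

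I expect the main point to be this last implication, or rather its combination with $(3)\Rightarrow(1)$: closure under the three explicit terms forces $F$ to be a congruence kernel, and a congruence kernel is automatically closed under every ideal term. The only genuinely non-mechanical step is finding the substitutions in $(3)\Rightarrow(1)$ --- above all, recovering modus ponens from $t_1$ by collapsing $y_1\ra(y_2\ra x)$ to $1$, and obtaining both doubling terms of (v) from $t_1$ (with $y_2=1$) and from $t_2$.
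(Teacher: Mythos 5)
Your proof is correct and follows essentially the same route as the paper: the same cycle of implications, with $(3)\Rightarrow(1)$ using the same substitutions ($t_1(b,a\ra b,a)$ for modus ponens, $t_1$ with one $y$-variable set to $1$ together with $t_2$ for condition (v), and $t_3$ for (iii)), and $(1)\Rightarrow(2)$ spelling out the standard fact, invoked implicitly in the paper, that congruence kernels are closed under ideal terms. The only cosmetic difference is your substitution $t_1(a,b,1)$ versus the paper's $t_1(a,1,b)$, which yield the same element $(b\ra a)\ra a$.
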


\begin{proof}
(1) $\Rightarrow$ (2). If $F\in\fil{A}$, then $F=[1]_\theta$ for some $\theta\in\conq{PBCI} A$, hence $F$ is closed under all ideal terms.
(2) $\Rightarrow$ (3). Trivial.
(3) $\Rightarrow$ (1). Suppose that $1\in F$ and that $F$ is closed under the three ideal terms. If $a,a\ra b\in F$, then $b=1\ra b=((a\ra b)\ra (a\ra b))\ra b = t_1 (b,a\ra b,a)\in F$ and also $a\ra 1 = t_3 (a)\in F$. 
Moreover, for any $a\in A$ and $b\in F$ we have $(b\ra a)\ra a=(1\ra (b\ra a))\ra a=t_1(a,1,b)\in F$ and $(b\rs a)\rs a=t_2(a,b)\in F$. Hence $F\in\fil A$.
\end{proof}

Instead of $t_1(x,y_1,y_2)$ and $t_2(x,y)$ we could take the ideal term
\[
w(x_1,x_2,y_1,y_2)=([(y_1\ra (y_2\ra x_1))\ra x_1]\rs x_2)\rs x_2.
\]

\begin{cor}
The quasivarieties $\var{PBCK}$ and $\var{PBCI}$ are relatively ideal determined.
\end{cor}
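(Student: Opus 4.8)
The plan is to unwind the definition directly: I must show that for every algebra $\alg A$ in the respective quasivariety, every ideal of $\alg A$ is the kernel $[1]_\theta$ of a \emph{unique} relative congruence $\theta$. For $\var{PBCI}$ this is almost immediate from what precedes, since Proposition \ref{prop-idealterms} identifies the ideals of a pseudo-BCI-algebra $\alg A$ with its filters, and the earlier discussion already records the lattice isomorphism $\bfil A\cong\bconq{PBCI}{A}$ given by the mutually inverse maps $F\mapsto\theta_F$ and $\theta\mapsto[1]_\theta$. So the task reduces to three points: (a) the kernel of a relative congruence is an ideal; (b) every ideal is such a kernel; (c) the relative congruence with a given kernel is unique.

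For (a) I would argue in general for any quasivariety $\var Q$ with constant $1$: if $\theta\in\conq{Q}{A}$ and $t(x_1,\dots,x_m,y_1,\dots,y_n)$ is a $\var Q$-ideal term, then for $a_1,\dots,a_m\in A$ and $b_1,\dots,b_n\in[1]_\theta$ we have $b_1\equiv\dots\equiv b_n\equiv 1\pmod{\theta}$, whence $t(a_1,\dots,a_m,b_1,\dots,b_n)\equiv t(a_1,\dots,a_m,1,\dots,1)=1\pmod{\theta}$, the last equality holding because $\alg A/\theta\in\var Q$ satisfies the defining identity of an ideal term; thus $t(a_1,\dots,a_m,b_1,\dots,b_n)\in[1]_\theta$, so $[1]_\theta$ is an ideal. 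Point (c) is then just relative $1$-regularity, already verified for $\var{PBCK}$ and $\var{PBCI}$: distinct relative congruences have distinct kernels, so an ideal is the kernel of at most one relative congruence. For (b) in the case $\var{PBCI}$, Proposition \ref{prop-idealterms} says every ideal $I$ of $\alg A$ is a filter, and filters are by definition exactly the kernels of relative $\var{PBCI}$-congruences; combined with (a) and (c) this gives the bijection $\theta\mapsto[1]_\theta$ and hence relative ideal determinacy of $\var{PBCI}$.

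The only step requiring genuine care is the passage to $\var{PBCK}$, because both ``ideal'' and ``relative congruence'' depend on the ambient quasivariety. I would first note that for $\alg A\in\var{PBCK}$ one has $\conq{PBCK}{A}=\conq{PBCI}{A}$: the extra axiom $x\ra 1=1$ is an \emph{identity}, so it is inherited by every quotient, and therefore $\alg A/\theta\in\var{PBCI}$ already forces $\alg A/\theta\in\var{PBCK}$. Second, since every $\var{PBCI}$-ideal term is a $\var{PBCK}$-ideal term (an identity valid in the larger class $\var{PBCI}$ is a fortiori valid in $\var{PBCK}$), every $\var{PBCK}$-ideal of $\alg A$ is in particular a $\var{PBCI}$-ideal, hence a filter by Proposition \ref{prop-idealterms}; conversely part (a) shows every kernel is a $\var{PBCK}$-ideal. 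Thus for $\alg A\in\var{PBCK}$ the $\var{PBCK}$-ideals coincide with the filters, which are precisely the kernels of the (common) relative congruences, and uniqueness follows from relative $1$-regularity as before. I expect this matching of the quasivariety-dependent notions of ideal and of relative congruence to be the main obstacle; everything else is a direct reading of Proposition \ref{prop-idealterms}.
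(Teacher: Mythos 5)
Your proof is correct and follows essentially the same route as the paper, which states this corollary without explicit proof as an immediate consequence of Proposition \ref{prop-idealterms} (ideals coincide with filters, i.e.\ with kernels of relative congruences) together with relative $1$-regularity for uniqueness. Your careful check that the $\var{PBCK}$- and $\var{PBCI}$-notions of ideal and of relative congruence agree on pseudo-BCK-algebras fills in a detail the paper leaves implicit, but it is the same argument, not a different approach.
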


By \cite[Corollary 1.5]{GU}, congruence lattices of algebras in ideal determined varieties are arguesian, and hence modular. 
For relatively ideal determined quasivarieties we have the following analogue.
For the reader's convenience we give a proof which, however, is but a reformulation of the proof of \cite[Lemma 2.1]{Jo53} (also see \cite[Theorem IV.4.10]{gratzer}).
Let us recall that a lattice is said to be \emph{arguesian} if it satisfies the identity
\[
(x_1\vee y_1) \wedge (x_2\vee y_2) \wedge (x_3\vee y_3) \leq (x_1 \wedge (x_2\vee z)) \vee (y_1 \wedge (y_2\vee z)),
\]
where $z=z_{12} \wedge (z_{13} \vee z_{23})$ and $z_{ij}=(x_i\vee x_j) \wedge (y_i\vee y_j)$ for $i<j$. An arguesian lattice is a fortiori modular; see \cite[pp. 260--261]{gratzer}.

\begin{prop}\label{rid-rcm}
Let $\var Q$ be a relatively ideal determined quasivariety.
\begin{enumerate}[\indent\upshape (1)]
\item
For every $\alg A\in\var Q$, $\bconq QA$ is an arguesian lattice. Hence $\var Q$ is relatively congruence modular.
\item 
If there is a binary term $s(x,y)$ in the language of $\var Q$ such that $\var Q$ satisfies the identities 
$s(x,1) = x$ and $s(x,x) = 1 = s(1,x)$, 
then $\var Q$ is relatively congruence distributive.
\end{enumerate}
\end{prop}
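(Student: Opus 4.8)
The plan for part (1) is to exploit the isomorphism $\theta\mapsto[1]_\theta$ between $\bconq Q A$ and $\bidq Q A$ supplied by relative ideal determinedness. Under it meets become intersections and, crucially, a relative congruence is determined by its $1$-class (relative $1$-regularity). Hence the arguesian identity will hold in $\bconq Q A$ as soon as, for the relative congruences substituted for $x_i,y_i$, the $1$-class of the left-hand side is contained in the $1$-class of the right-hand side; the entire verification can thus be run at the level of the single element $1$. The one extra tool I need is relative $1$-permutability, namely $[1]_{\theta\vee\phi}=[1]_{\theta\circ\phi}=\{a\mid (a,b)\in\theta\text{ and }b\in[1]_\phi\text{ for some }b\}$, which I would derive from relative ideal determinedness exactly as $0$-permutability is obtained in the variety case in \cite{GU}.

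Granting relative $1$-permutability, the lattice $\bconq Q A$ behaves, as far as its $1$-classes are concerned, like a lattice of permuting equivalence relations, and I would transcribe the point construction from the proof of \cite[Lemma 2.1]{Jo53}. Starting from an element $a$ in the $1$-class of $(x_1\vee y_1)\wedge(x_2\vee y_2)\wedge(x_3\vee y_3)$, one uses $1$-permutability along each of the three ``lines'' to choose witnessing elements, combines them to realize the auxiliary element $z=z_{12}\wedge(z_{13}\vee z_{23})$ with $z_{ij}=(x_i\vee x_j)\wedge(y_i\vee y_j)$, and so exhibits $a$ in the $1$-class of $(x_1\wedge(x_2\vee z))\vee(y_1\wedge(y_2\vee z))$. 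Since every arguesian lattice is modular, it then follows at once that $\var Q$ is relatively congruence modular.

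The step I expect to be the main obstacle is precisely this transcription. Jónsson's original argument reverses relational compositions freely, using full permutability, whereas here permutability is available only ``at $1$''. The construction must therefore be arranged so that every composition that has to be reversed is one whose relevant endpoint has been forced into a $1$-class, where relative $1$-permutability applies, and one must keep the chain of witnessing elements mutually consistent across the three lines and the auxiliary element $z$. Managing this bookkeeping, rather than any single clever identity, is where the real work lies.

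For part (2) I would instead prove directly that $\bidq Q A$ is distributive, for which only the inclusion $I\wedge(J\vee K)\leq(I\wedge J)\vee(I\wedge K)$ requires work. Write $\theta_I\in\conq Q A$ for the relative congruence with kernel $I$, and similarly for the other ideals. Given $a\in I\cap(J\vee K)$, relative $1$-permutability provides $c\in K$ with $(a,c)\in\theta_J$; set $m=s(a,c)$. The three defining identities of $s$ make this single element suffice: from $(a,c)\in\theta_J$ we get $(s(a,c),s(c,c))\in\theta_J$, and $s(c,c)=1$ gives $m\in J$; from $(a,1)\in\theta_I$ we get $(s(a,c),s(1,c))\in\theta_I$, and $s(1,c)=1$ gives $m\in I$, so $m\in I\wedge J$. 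On the other hand $(c,1)\in\theta_K$ gives $(s(a,c),s(a,1))\in\theta_K$, and $s(a,1)=a$ gives $(a,m)\in\theta_K$; since also $a,m\in I$ we have $(a,m)\in\theta_I$, whence $(a,m)\in\theta_I\wedge\theta_K=\theta_{I\wedge K}$. Thus $a$ is congruent to $m\in I\wedge J$ modulo $\theta_{I\wedge K}$, and one final appeal to $1$-permutability places $a$ in $(I\wedge K)\vee(I\wedge J)$. This yields distributivity of $\bconq Q A$, i.e. relative congruence distributivity of $\var Q$. Here the obstacle is light: everything hinges on the single element $s(a,c)$, and the three identities are exactly what place it in $I\wedge J$ and make it congruent to $a$ modulo $I\wedge K$.
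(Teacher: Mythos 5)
For part (1) your proposal identifies the right key lemma but does not actually contain a proof. What you call relative $1$-permutability is exactly the paper's \eqref{spojeniidealu}, and deferring its verification to the ideal-term argument of \cite{GU} is defensible (the paper itself presents it as \cite[Lemma 1.4]{GU} rephrased; the only point where relative ideal determinedness enters is that $[1]_{\phi\vee\psi}$ is the $\var Q$-ideal generated by $[1]_\phi\cup[1]_\psi$). What is missing is the arguesian verification itself, which you explicitly set aside as ``the main obstacle'' and ``where the real work lies''. That verification is the entire content of part (1), and your diagnosis of the difficulty is also off: a composition needs to be ``reversed'' exactly once, namely in the initial application of \eqref{spojeniidealu} producing, from $(a,1)\in(\phi_1\vee\psi_1)\cap(\phi_2\vee\psi_2)\cap(\phi_3\vee\psi_3)$, elements $b_i$ with $(a,b_i)\in\phi_i$ and $(b_i,1)\in\psi_i$. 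After that only the trivial containment of a relational composition in the corresponding join is used: setting $\theta_{ij}=(\phi_i\vee\phi_j)\cap(\psi_i\vee\psi_j)$ and $\theta=\theta_{12}\cap(\theta_{13}\vee\theta_{23})$, one gets $(b_i,b_j)\in(\phi_i\circ\phi_j)\cap(\psi_i\circ\psi_j)\subseteq\theta_{ij}$, hence $(b_1,b_2)\in\theta_{12}\cap(\theta_{13}\circ\theta_{23})\subseteq\theta$, then $(a,b_1)\in\phi_1\cap(\phi_2\circ\theta)$ and $(b_1,1)\in\psi_1\cap(\theta\circ\psi_2)$, so $(a,1)\in(\phi_1\cap(\phi_2\vee\theta))\vee(\psi_1\cap(\psi_2\vee\theta))$; since $\theta\mapsto[1]_\theta$ is a lattice isomorphism of $\bconq{Q}{A}$ onto $\bidq{Q}{A}$, this inclusion of $1$-classes yields the arguesian inequality. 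None of this bookkeeping is delicate, but none of it is in your text either, so as written part (1) is unproved.

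Part (2), granting \eqref{spojeniidealu}, is correct and complete, and it takes a mildly different route from the paper: your witness $m=s(a,c)$ is simpler than the paper's element $s(a,s(s(a,b),b))$, and you conclude via $m\in I\cap J$ together with $(a,m)\in\theta_I\cap\theta_K$, whereas the paper's element is congruent to $a$ modulo $\chi$ and to $1$ modulo $\phi\cap\psi$. Your three computations $(s(a,c),s(c,c))\in\theta_J$, $(s(a,c),s(1,c))\in\theta_I$, $(s(a,c),s(a,1))\in\theta_K$, with $s(c,c)=s(1,c)=1$ and $s(a,1)=a$, do exactly what you claim, and $(a,m)\in\theta_I$ follows from $a,m\in I$ by symmetry and transitivity alone. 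One cosmetic remark: the ``final appeal to $1$-permutability'' is unnecessary, since $(\theta_I\cap\theta_K)\circ(\theta_I\cap\theta_J)\subseteq(\theta_I\cap\theta_K)\vee(\theta_I\cap\theta_J)$ holds for arbitrary congruences, and the isomorphism $\bconq{Q}{A}\cong\bidq{Q}{A}$ then places $a$ in $(I\wedge K)\vee(I\wedge J)$.
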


\begin{proof}
Let $\alg A$ be a fixed algebra in a relatively ideal determined quasivariety $\var Q$ with a constant $1$.
First, we observe that for any $\phi, \psi\in \conq Q A$ and $a\in A$ we have 
\begin{equation}\label{spojeniidealu}
(a,1)\in\phi\vee\psi \quad\text{iff}\quad (a,1)\in\phi\circ\psi \quad\text{iff}\quad (a,1)\in\psi\circ\phi,
\end{equation}
whence the join of the $\var Q$-ideals $[1]_\phi$ and $[1]_\psi$ in the $\var Q$-ideal lattice $\bidq Q A$ is
$[1]_\phi \vee [1]_\psi = \{a\in A\mid (a,1)\in \phi\circ\psi\} = \{a\in A\mid (a,1)\in\psi\circ\phi\}$.
This is basically \cite[Lemma 1.4]{GU} rephrased for relatively ideal determined classes (only ideal determined classes are considered in \cite{GU}).

Since $[1]_{\phi}\vee [1]_{\psi} = [1]_{\phi\vee\psi}$ is the $\var Q$-ideal generated by $[1]_{\phi} \cup [1]_{\psi}$, we have $(a,1)\in\phi\vee\psi$ iff there is a $\var Q$-ideal term $t(x_1,\dots,x_m,y_1,\dots,y_n)$ in $y_1,\dots,y_n$ and elements $a_1,\dots,a_m\in A$, $b_1,\dots,b_n\in [1]_\phi\cup [1]_\psi$ such that
$a = t(a_1,\dots,a_m,b_1,\dots,b_n)$. For each $i = 1,\dots,n$, let $c_i = 1$ if $(b_i,1)\in\phi$, and $c_i = b_i$ otherwise. Then
\[
a=t(a_1,\dots,a_m,b_1,\dots,b_n) \equiv_\phi t(a_1,\dots,a_m,c_1,\dots,c_n)
\] 
since $(b_i,c_i)\in \phi$ for all $i$, and
\[
t(a_1,\dots,a_m,c_1,\dots,c_n) \equiv_\psi t(a_1,\dots,a_m,1,\dots,1) = 1
\]
since $(c_i,1)\in\psi$ for all $i$, whence $(a,1)\in \phi\circ\psi$. The opposite direction is clear as $\phi\circ\psi\subseteq\phi\vee\psi$. This completes the proof of \eqref{spojeniidealu}.

(1) To prove that $\bconq QA \cong \bidq QA$ is an arguesian lattice, we take $\phi_i,\psi_i\in\conq QA$, for $i=1,2,3$, and put 
\[
\theta = \theta_{12} \cap (\theta_{13} \vee \theta_{23}),
\]
where
\[
\theta_{ij}=(\phi_i \vee \phi_j) \cap (\psi_i \vee \psi_j)
\]
for $i<j$.

Suppose that $(a,1)\in (\phi_1 \vee \psi_1) \cap (\phi_2 \vee \psi_2) \cap (\phi_3 \vee \psi_3)$. 
By \eqref{spojeniidealu} there exist $b_i\in A$ such that $(a,b_i)\in\phi_i$ and $(b_i,1)\in\psi_i$ for $i=1,2,3$.
Obviously, $(b_i,b_j)\in (\phi_i\circ\phi_j) \cap (\psi_i\circ\psi_j)\subseteq \theta_{ij}$. Then $(b_1,b_2)\in \theta_{12} \cap (\theta_{13}\circ \theta_{23}) \subseteq \theta$. Moreover, $(a,b_1)\in \phi_1 \cap (\phi_2\circ\theta)$ since $(a,b_2)\in\phi_2$ and $(b_2,b_1)\in\theta$, and $(b_1,1)\in \psi_1\cap (\theta\circ\psi_2)$ since $(b_2,1)\in\psi_2$ and $(b_1,b_2)\in\theta$.
Hence $(a,1)\in (\phi_1 \cap (\phi_2\circ\theta)) \circ (\psi_1 \cap (\theta\circ\psi_2)) \subseteq (\phi_1 \cap (\phi_2\vee\theta))\vee (\psi_1 \cap (\psi_2\vee\theta))$.
Recalling \eqref{spojeniidealu} we see that the lattice $\bidq QA$ is arguesian.

(2) Let $\phi,\chi,\psi\in\conq{Q}{A}$ and $(a,1)\in \phi \cap (\chi\vee\psi)$. By \eqref{spojeniidealu} there exists $b\in A$ such that $(a,b)\in\chi$ and $(b,1)\in\psi$. 
Let $c=s(a,s(s(a,b),b))$. Since $\var{Q}$ satisfies the identities $s(x,1) = x$ and $s(x,x) = 1 = s(1,x)$, we have 
\[
c \equiv_\phi s(1,s(s(1,b),b)) = 1,\quad
c \equiv_\chi s(a,s(s(a,a),a)) = a,\quad
c \equiv_\psi s(a,s(s(a,1),1)) = 1.
\]
Hence $(a,c) \in \phi\cap\chi$ and $(c,1) \in \phi\cap\psi$, and so $(a,1)\in (\phi\cap\chi)\circ (\phi\cap\psi) \subseteq (\phi\cap\chi)\vee (\phi\cap\psi)$.
\end{proof}

The statement (2) is a corollary of \cite[Corollary 12.2.5]{BR08} which says that a relatively $1$-regular quasivariety $\var Q$ is relatively congruence distributive if there is a binary term $s(x,y)$ such that $\var Q$ satisfies the identities of (2). In the case of $\var{PBCK}$ it suffices to take $s(x,y)=y\ra x$. 

\begin{cor}
The relative congruence lattice of any pseudo-BCI-algebra {\upshape(}resp. pseudo-BCK-algebra{\upshape)} is arguesian {\upshape(}resp. distributive{\upshape)}. Thus $\var{PBCI}$ {\upshape(}resp. $\var{PBCK}${\upshape)} is relatively congruence modular {\upshape(}resp. distributive{\upshape)}.
\end{cor}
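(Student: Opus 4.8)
The plan is to read the corollary off directly from Proposition~\ref{rid-rcm}, once we record that both quasivarieties are relatively ideal determined. This is precisely the content of the preceding corollary, which in turn is immediate from Proposition~\ref{prop-idealterms}: its finite basis of ideal terms $t_1,t_2,t_3$ shows that for every $\alg A\in\var{PBCI}$ the ideals and the filters coincide, so each ideal is the kernel of a unique relative congruence. Thus both $\var{PBCI}$ and $\var{PBCK}$ satisfy the hypothesis ``$\var Q$ is relatively ideal determined'' of Proposition~\ref{rid-rcm}, and the corollary is obtained simply by instantiating $\var Q$ to these two cases.

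For the pseudo-BCI case I would apply Proposition~\ref{rid-rcm}(1) with $\var Q=\var{PBCI}$: since $\var{PBCI}$ is relatively ideal determined, $\bconq{PBCI}{A}$ is an arguesian lattice for every pseudo-BCI-algebra $\alg A$. As every arguesian lattice is modular, this yields that $\var{PBCI}$ is relatively congruence modular, as claimed.

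For the pseudo-BCK case I would invoke Proposition~\ref{rid-rcm}(2), whose only extra requirement is a binary term $s(x,y)$ with $s(x,1)=x$ and $s(x,x)=1=s(1,x)$. The candidate, already indicated in the remark following Proposition~\ref{rid-rcm}, is $s(x,y)=y\ra x$, and I would verify the three identities in any pseudo-BCK-algebra: $s(x,1)=1\ra x=x$ by \eqref{rpom2a}; $s(x,x)=x\ra x=1$ by Lemma~\ref{L1}\eqref{vl1}; and $s(1,x)=x\ra 1=1$ by the integrality identity \eqref{integral}, which holds in every pseudo-BCK-algebra by definition. Feeding this $s$ into Proposition~\ref{rid-rcm}(2) gives that $\var{PBCK}$ is relatively congruence distributive, so $\bconq{PBCK}{A}$ is distributive for every $\alg A$. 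There is essentially no obstacle: all the substance lives in Proposition~\ref{rid-rcm}, and the single line that needs checking is that $s(x,y)=y\ra x$ meets the hypotheses of part~(2). The one conceptual point worth flagging is that this verification relies on $x\ra 1=1$, the defining identity distinguishing pseudo-BCK- from pseudo-BCI-algebras; its failure in the general pseudo-BCI case is exactly why one can only assert modularity (not distributivity) there, consistent with the fact that $\var{PBCI}$ contains a group-theoretic subclass.
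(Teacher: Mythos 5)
Your proposal is correct and is exactly the paper's intended derivation: the corollary is read off from Proposition~\ref{rid-rcm} using the fact (from Proposition~\ref{prop-idealterms} and its corollary) that $\var{PBCI}$ and $\var{PBCK}$ are relatively ideal determined, with part~(1) giving the arguesian/modular conclusion for $\var{PBCI}$ and part~(2), instantiated with $s(x,y)=y\ra x$ (the same term the paper indicates), giving distributivity for $\var{PBCK}$. Your verification of the three identities for $s$, including the observation that $s(1,x)=x\ra 1=1$ is precisely where integrality is used, matches the paper's reasoning.
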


For an alternative proof of relative congruence distributivity of pseudo-BCK-algebras, see \cite[Corollary 2.1.12 and 2.2.9]{JK-hab} or \cite{HK}.

By \cite[Corollary 1.9]{GU}, a $1$-regular variety $\var V$ is ideal determined if and only if there exists a binary term $s(x,y)$ such that $\var V$ satisfies the identities 
\[
s(x,x) = 1 \quad\text{and}\quad s(x,1) = x;
\] 
cf. Proposition \ref{rid-rcm} (2). Such a term is called a \emph{subtractive term} for $\var V$.
Unfortunately, a relatively $1$-regular quasivariety having a subtractive term may not be relatively ideal determined. For example (cf. \cite[Example 7.7]{BR99} or \cite[Example 4.7.5]{BR08}), if $\var Q$ is the quasivariety of all torsion-free abelian groups, then the additive group of integers $\mathbb{Z}$ belongs to $\var Q$ and the only $\var Q$-congruences of $\mathbb{Z}$ are $\Delta$ and $\nabla$. But the $\var Q$-ideals of $\mathbb{Z}$ are precisely the subgroups of $\mathbb{Z}$, hence $\var Q$ is not relatively ideal determined, though it is relatively $0$-regular and satisfies the identities $x = x-0$ and $x-x = 0$, i.e., $s(x,y)=x-y$ is a subtractive term for $\var Q$.

It was proved in \cite[Theorem 12.2.6]{BR08} that a relatively $1$-regular quasivariety $\var Q$ is relatively congruence modular whenever it is \emph{$1$-conservative}, in the sense that $\var Q$ and the variety $\mathrm{HSP}(\var Q)$ generated by $\var Q$ satisfy the same quasi-identities of the form 
\[\textstyle
\bigwedge_{i=1}^{n} s_i(x_1,\dots,x_k) = 1 \quad\Rightarrow\quad t(x_1,\dots,x_k) = 1.
\]
Conservativeness has some notable consequences (see \cite[Corollary 12.1.2]{BR08}); for instance, relatively (finitely) subdirectly irreducible algebras in $\var Q$ are (finitely) subdirectly irreducible in the absolute sense.
That $\var{PBCK}$ has this property was proved in \cite[Corollary 3.6]{vA06}.

\begin{prop}
Let $\var Q$ be a relatively $1$-regular quasivariety. Then $\var Q$ is relatively ideal determined if and only if $\var Q$ is $1$-conservative and has a subtractive term.
\end{prop}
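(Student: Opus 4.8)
The plan is to compare $\var Q$ with the variety it generates, $\var V=\mathrm{HSP}(\var Q)$. Since a quasivariety is closed under $\mathrm S$ and $\mathrm P$, we have $\var V=\mathrm H(\var Q)$, and $\var Q$ and $\var V$ satisfy \emph{exactly the same identities}; consequently they have the same ideal terms, every $\alg A\in\var Q$ has the same ideals with respect to both classes, and $\var Q$ has a subtractive term iff $\var V$ does. Throughout, $d_1,\dots,d_n$ denote the binary terms witnessing relative $1$-regularity, so that $\var Q\models d_i(x,x)=1$ and $\var Q\models\bigwedge_i d_i(x,y)=1\Leftrightarrow x=y$.

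For the forward implication I would extract the two conditions separately. To produce a subtractive term I would exploit that, for a relatively ideal determined $\var Q$, relative congruences are ``$1$-permutable'' in the sense of \eqref{spojeniidealu}. Working in the relatively free algebra $\alg{F}_{\var{Q}}(x,y)$, apply \eqref{spojeniidealu} to $\phi=\theta_{\var Q}(x,y)$ and $\psi=\theta_{\var Q}(y,1)$ (the least relative congruences collapsing the indicated pairs): from the obvious $(x,1)\in\phi\circ\psi$ one gets $(x,1)\in\psi\circ\phi$, i.e.\ a term $s(x,y)$ with $(x,s)\in\theta_{\var Q}(y,1)$ and $(s,1)\in\theta_{\var Q}(x,y)$. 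Passing to the two quotients, both isomorphic to $\alg{F}_{\var{Q}}(x)$, reads off the identities $s(x,1)=x$ and $s(x,x)=1$, so $s$ is subtractive. To establish $1$-conservativeness, let $\bigwedge_i s_i=1\Rightarrow t=1$ hold in $\var Q$ and let $\alg C/\theta$ (with $\alg C\in\var Q$, $\theta\in\con{C}$) be a candidate failure in $\var V=\mathrm H(\var Q)$, say $s_i(\bar a)\in[1]_\theta=:I$ for all $i$. Since $I$ is an ideal, there is a relative congruence $\theta'\in\conq{Q}{C}$ with $[1]_{\theta'}=I$; the hypotheses then hold in $\alg C/\theta'\in\var Q$, so $t(\bar a)\in[1]_{\theta'}=I=[1]_\theta$, i.e.\ the conclusion holds modulo $\theta$. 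Hence the quasi-identity passes to $\var V$.

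For the converse, assume $\var Q$ is $1$-conservative with a subtractive term $s$, fix $\alg A\in\var Q$ and an ideal $I$, and set $\theta:=\{(u,v)\mid d_i(u,v)\in I \text{ for all } i\}$. I would first show $\theta$ is a congruence with $[1]_\theta=I$; this is where $s$ does the essential work, via the theory of subtractive varieties (as in \cite{GU}): in the subtractive variety $\var V$ every ideal of $\alg A$ is the $1$-class of some congruence, and relative $1$-regularity lets one identify that congruence with $\theta$ and pin the kernel down to $I$. Next I would show $\alg A/\theta\in\var Q$ using conservativeness: given a defining quasi-identity $\rho\colon\bigwedge_j p_j=q_j\Rightarrow p=q$ of $\var Q$, form its special-form companion $\rho^{*}_{i_0}\colon\bigwedge_{i,j}d_i(p_j,q_j)=1\Rightarrow d_{i_0}(p,q)=1$, which holds in $\var Q$ (by regularity and $\rho$) and hence, by conservativeness, in $\var V$; evaluating $\rho^{*}_{i_0}$ in $\alg A/\theta\in\var V$, where the members of $I=[1]_\theta$ become $1$, and reading the conclusion back through the very definition of $\theta$, yields $p\equiv q$ modulo $\theta$, so $\alg A/\theta\models\rho$. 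Thus $\theta\in\conq{Q}{A}$ has kernel $I$, and uniqueness among relative congruences is immediate from relative $1$-regularity; so $\var Q$ is relatively ideal determined.

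The main obstacle is concentrated in the congruence/kernel step of the converse. The tempting shortcut --- to build $\theta$ by declaring $d_i(v,u)$, compatibility witnesses, etc.\ to lie in ``the ideal generated by'' suitable elements --- is \emph{circular}, since that identification (ideal generated $=$ relative-congruence kernel generated) is precisely relative ideal determinacy; this is exactly the gap that subtractivity, rather than mere $1$-regularity, must fill. A second conceptual point I would stress is that, unlike the variety case, one \emph{cannot} route the converse through ``$\var V$ is $1$-regular, hence ideal determined by \cite{GU}'': if $\var V=\mathrm{HSP}(\var Q)$ were $1$-regular, the special-form transfer $\rho^{*}_{i_0}$ above would force $\var V\models\rho$ for every defining $\rho$, collapsing $\var V$ to $\var Q$ and making $\var Q$ a variety. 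Since $\var Q$ (e.g.\ $\var{PBCI}$) is genuinely a proper quasivariety, $\var V$ need not be $1$-regular, and the argument must be carried out directly inside $\var Q$, with $1$-conservativeness as the sole bridge to $\var V$.
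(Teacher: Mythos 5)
Your \emph{forward} direction is sound, and it actually writes out more than the paper does. The extraction of the subtractive term from \eqref{spojeniidealu} applied to $\mathrm{Cg}_{\var Q}(x,y)$ and $\mathrm{Cg}_{\var Q}(y,1)$ in $\alg{F}_{\var Q}(x,y)$ is exactly the paper's own argument (the paper reads off the identities via homomorphisms into arbitrary members of $\var Q$ rather than via the quotients $\cong\alg{F}_{\var Q}(x)$, which is the same computation), and your proof of $1$-conservativeness --- kernels of \emph{absolute} congruences of algebras in $\var Q$ are ideals, hence by relative ideal determinacy kernels of relative congruences, so special-form quasi-identities pass to $\mathrm{H}(\var Q)$ --- is a correct rendering of what the paper only asserts ``could be established directly''. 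Bear in mind, though, that the paper never proves the \emph{converse} itself: it cites \cite[Corollary 14.3.5]{BR08} for the full equivalence, and the converse is precisely the nontrivial content of that citation. That is exactly where your proposal breaks.

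The gap sits at the sentence ``relative $1$-regularity lets one identify that congruence with $\theta$''. The congruence $\theta_0$ supplied by subtractive-variety theory (with $[1]_{\theta_0}=I$) is merely an \emph{absolute} congruence of $\alg A$; its quotient $\alg A/\theta_0$ lies in $\var V=\mathrm{H}(\var Q)$ and need not satisfy the regularity quasi-identity $\bigwedge_i d_i(x,y)=1\Rightarrow x=y$, and relative $1$-regularity says nothing about such congruences. All that follows is $\theta_0\subseteq\theta$ (from $d_i(x,x)=1$), and the inclusion can be proper: for $\var Q=\var{BCI}$, take $\theta_0$ a congruence whose quotient violates antisymmetry; then $\theta$ collapses pairs that $\theta_0$ does not. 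So neither ``$\theta$ is a congruence'' nor ``$[1]_\theta=I$'' has been established, and your final $\rho^{*}$ step has nothing to run on; indeed, positing a congruence with kernel $I$ that coincides with $\theta$ is essentially positing the relative congruence you are trying to build, the very circularity your last paragraph warns against. The step is repairable, but conservativeness, not regularity, must do the work there: use $\theta_0$ only as a \emph{reference quotient} in which membership in $I$ becomes equality with $1$, transfer to $\var V$ the special-form quasi-identities expressing symmetry, transitivity and operation-compatibility of $\theta$ together with $\bigwedge_i d_i(x,1)=1\Rightarrow x=1$, and evaluate them in $\alg A/\theta_0$; this yields that $\theta$ is a congruence with $[1]_\theta=I$, after which your $\rho^{*}$ argument in $\alg A/\theta$ correctly gives $\alg A/\theta\in\var Q$. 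Finally, an attribution point that matters for rigor: the fact that in a subtractive variety every ideal is the $1$-class of \emph{some} congruence is not in \cite{GU} (which treats ideal determined, i.e.\ $1$-regular subtractive, varieties); it is a separate theorem of Ursini and is itself a nontrivial ingredient you would have to justify rather than quote in passing.
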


\begin{proof} 
This follows from \cite[Corollary 14.3.5]{BR08}; in fact, the statement is \cite[Corollary 14.3.5]{BR08} particularized to relatively $1$-regular quasivarieties. Conservativeness of $\var Q$ could be established directly, using ideal terms and \eqref{spojeniidealu}.

Also, it follows from \cite[Proposition 7.2]{BR99} that a relatively ideal determined quasivariety has a subtractive term. Here, we give a straightforward proof based on the satisfaction of \eqref{spojeniidealu}.
For any $\alg A\in\var Q$ and $\rho\subseteq A^2$, $\mathrm{Cg}_{\var Q}(\rho)$ denotes the $\var Q$-congruence of $\alg A$ generated by $\rho$.

Let $\alg F$ be the $\var Q$-free algebra $\alg{F}_{\var Q}(x,y)$; then $\alg F\in\var Q$.
Clearly, $(x,1) \in \mathrm{Cg}_{\var Q}(x,y) \circ \mathrm{Cg}_{\var Q}(y,1)$, and hence also $(x,1) \in \mathrm{Cg}_{\var Q}(y,1) \circ \mathrm{Cg}_{\var Q}(x,y)$ by \eqref{spojeniidealu}. Thus there exists $s(x,y)\in\alg F$ such that 
$(x,s(x,y)) \in \mathrm{Cg}_{\var Q}(y,1)$ and $(s(x,y),1) \in \mathrm{Cg}_{\var Q}(x,y)$.
Let $\alg A$ be an arbitrary algebra in $\var Q$. For any $a\in A$, the assignment $x\mapsto a$, $y\mapsto 1$ determines a unique homomorphism $\eta$ from $\alg F$ to $\alg A$. The kernel congruence of $\eta$ is a $\var Q$-congruence of $\alg F$, hence $\mathrm{Cg}_{\var Q}(y,1)\subseteq\ker\eta$. But then $a=\eta(x)=\eta(s(x,y))=s(\eta(x),\eta(y))=s(a,1)$.
This proves that $\var Q$ satisfies $x = s(x,1)$.
The argument for the identity $s(x,x)= 1$ is similar.
\end{proof}

\begin{cor}
The quasivarieties $\var{PBCK}$ and $\var{PBCI}$ are $1$-conservative.
\end{cor}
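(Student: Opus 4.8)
The plan is to read this statement off directly from the preceding Proposition, which characterizes, within the class of relatively $1$-regular quasivarieties, relative ideal determinacy as the conjunction of $1$-conservativeness and the existence of a subtractive term. We have already recorded that $\var{PBCK}$ and $\var{PBCI}$ are relatively $1$-regular, and the Corollary just above establishes that both are relatively ideal determined. Applying the left-to-right implication of that Proposition to each of these quasivarieties therefore yields $1$-conservativeness at once. Concretely, the deduction is: relatively ideal determined together with relatively $1$-regular implies, by the Proposition, $1$-conservative; hence both $\var{PBCK}$ and $\var{PBCI}$ are $1$-conservative.

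Since every ingredient is already in place, there is essentially no obstacle; the whole content of the argument is the invocation of the Proposition, and the only thing to be careful about is citing the correct (forward) direction of that equivalence. For orientation one may note that the subtractive term demanded by the Proposition is explicit in this setting---the term $s(x,y)=y\ra x$ satisfies $s(x,x)=x\ra x=1$ by Lemma \ref{L1}\eqref{vl1} and $s(x,1)=1\ra x=x$ by \eqref{rpom2a}---but this observation is relevant only to the reverse direction of the Proposition and is not used in the deduction we carry out, in which relative ideal determinacy alone forces $1$-conservativeness.
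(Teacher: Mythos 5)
Your proposal is correct and matches the paper's (implicit) argument: the corollary is stated without proof precisely because it follows at once from the forward direction of the preceding Proposition, combined with the earlier facts that $\var{PBCK}$ and $\var{PBCI}$ are relatively $1$-regular and relatively ideal determined. Your side remark about the subtractive term $s(x,y)=y\ra x$ is accurate and correctly flagged as unnecessary for the direction being invoked.
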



In conclusion, we return to pseudo-BCI-algebras. Let $\alg A$ be a pseudo-BCI-algebra. 
Dymek \cite{D14} described the ``deductive system'' generated by $\emptyset\neq S\subseteq A$; our goal is to describe the prefilter $\mathrm{Pg}(S)$ generated by a given non-empty subset $S$, with an intent to prove that $\bfil A$ is a sublattice of $\bpfil A$.
Recall that $a_1\dots a_n\ra x$ and $a_1\dots a_n\rs x$ are shorthands for $a_1\ra (\dots \ra (a_n\ra x)\dots)$ and $a_n\rs (\dots \rs (a_1\rs x)\dots)$, respectively.

\begin{lem}\label{L6}
Let $\alg{A}$ be a pseudo-BCI-algebra. For any $\emptyset\neq S\subseteq A$ one has
\[
\mathrm{Pg}(S) = \{x\in A\mid a_1\dots a_n\ra x=1 \text{ for some } a_1,\dots,a_n\in S\cup (S\ra 1), n\in\N\}
\]
where $S\ra  1=\{x\ra 1\mid x\in S\}$.
\end{lem}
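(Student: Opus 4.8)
The plan is to set $T=S\cup(S\ra 1)$ and let $P$ be the set on the right-hand side, so that $P$ consists exactly of those $x\in A$ derivable from $T$ by repeated modus ponens, i.e.\ with $a_1\dots a_n\ra x=1$ for some word over $T$. I would first record two elementary ``word lemmas''. Iterating Lemma~\ref{L1}\eqref{vl9} gives monotonicity in the last argument, $u\leq v\Rightarrow\alpha\ra u\leq\alpha\ra v$ for every word $\alpha$; and iterating Lemma~\ref{L1}\eqref{vl8}, together with $1\ra w=w$ from \eqref{rpom2a}, gives $a\ra b\leq(\alpha\ra a)\ra(\alpha\ra b)$, so that $\alpha\ra a=1$ forces $a\ra b\leq\alpha\ra b$.

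The inclusion $P\subseteq\mathrm{Pg}(S)$ is routine: any prefilter $F\supseteq S$ contains $S\ra 1$ by condition (iii), hence $F\supseteq T$, and if $a_1\dots a_n\ra x=1\in F$ with all $a_i\in T\subseteq F$ then $n$ applications of (ii) yield $x\in F$; taking $F=\mathrm{Pg}(S)$ gives the claim. For the reverse inclusion I would prove that $P$ is itself a prefilter containing $S$, so that minimality of $\mathrm{Pg}(S)$ forces $\mathrm{Pg}(S)\subseteq P$. Here $S\subseteq P$ and $1\in P$ are immediate from Lemma~\ref{L1}\eqref{vl1} (take $\alpha=s$, resp.\ $\alpha=(s\ra 1)\,s$ for some $s\in S$), and condition (ii) is exactly closure of a deductive closure under modus ponens: given words $\alpha,\beta$ over $T$ with $\alpha\ra a=1$ and $\beta\ra(a\ra b)=1$, the word lemma gives $a\ra b\leq\alpha\ra b$, whence $1=\beta\ra(a\ra b)\leq\beta\ra(\alpha\ra b)=(\beta\alpha)\ra b$ by monotonicity; since $1$ is maximal this is an equality, so $b\in P$ is witnessed by the concatenation $\beta\alpha$ over $T$.

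The real content is condition (iii): $x\in P\Rightarrow x\ra 1\in P$. Here I would pass to the group part and exploit that $\gamma\colon y\mapsto y\ra 1$ is a homomorphism onto $\alg{G}_{\alg A}^\dagger$ (see \eqref{gamma}) which restricts to inversion on $G_{\alg A}$. Applying $\gamma$ to $a_1\dots a_n\ra x=1$ and computing in the group $(G_{\alg A},\cdot,1)$, where $p\rs q=p^{-1}\cdot q$, collapses the nested arrows to the single group identity $x\ra 1=\gamma(a_n)\cdots\gamma(a_1)$. On the other hand, by Lemma~\ref{vnoreni2} a word $\beta=b_1\dots b_n$ over $T$ reaches the maximal element $x\ra 1$ (i.e.\ $\beta\ra(x\ra 1)=1$) precisely when $(\beta\ra 1)\ra 1=x\ra 1$, and the same homomorphism computation rewrites the left-hand side as $\gamma(b_1)^{-1}\cdots\gamma(b_n)^{-1}$. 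Thus it suffices to realize each factor $\gamma(a_i)$ as $\gamma(b)^{-1}$ for a single generator $b\in T$: if $a_i\in S$ one takes $b=a_i\ra 1\in S\ra 1$, and if $a_i=s\ra 1\in S\ra 1$ one takes $b=s\in S$, using that $\gamma$ inverts on $G_{\alg A}$. Reading the $b_j$ in the reversed order then produces a word over $T$ reaching $x\ra 1$, so $x\ra 1\in P$.

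I expect step (iii) to be the main obstacle, as it is the only point where the non-integral (group) part genuinely enters and where one must use that $T=S\cup(S\ra 1)$ is closed under the combined action of $\gamma$ and group inversion; the remaining verifications are the standard deductive-system arguments adapted to the two non-commuting arrows $\ra$ and $\rs$ (the symmetric identities of Lemma~\ref{L1} guaranteeing that $\rs$ could be used in place of $\ra$ throughout).
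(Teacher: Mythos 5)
Your proposal is correct, and its skeleton (show the right-hand side $P$ is a prefilter containing $S$, then invoke minimality of $\mathrm{Pg}(S)$) coincides with the paper's; your verifications of $1\in P$, $S\subseteq P$ and of the modus ponens condition (ii) differ only cosmetically from the paper's (the paper works with $\rs$-words and the exchange law, Lemma~\ref{L1}\eqref{vl6}--\eqref{vl7}, whereas you iterate \eqref{vl8} and \eqref{vl9} on $\ra$-words; both are sound). The genuine divergence is in step (iii), closure under $y\mapsto y\ra 1$. The paper stays inside $\alg A$: it applies $(\cdot)\ra 1$ to $a_1\dots a_n\ra x=1$, expands by the identity \eqref{vl11} into an $\rs$-word in the letters $a_i\ra 1$ acting on $x\ra 1$, and then repairs the letters that may fall outside $S\cup(S\ra 1)$ (namely $(s\ra 1)\ra 1$ for $s\in S$) by replacing them with $s$, using $s\leq(s\ra 1)\ra 1$, the monotonicity rules \eqref{vl5} and \eqref{vl9}, and maximality of $1$. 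You instead push everything into the group part: applying the homomorphism $\gamma$ turns the hypothesis into the group identity $x\ra 1=\gamma(a_n)\cdots\gamma(a_1)$, Lemma~\ref{vnoreni2} translates ``some word $\beta$ over $T$ reaches $x\ra 1$'' into the group identity ``$(\beta\ra 1)\ra 1=x\ra 1$'', and matching factors is possible exactly because swapping $s\leftrightarrow s\ra 1$ realizes inversion under $\gamma$, which is where the closure of $T=S\cup(S\ra 1)$ enters. The two arguments in fact produce the same witness word (your reversed $b_j$'s are the paper's repaired letters $c_i$ read in the paper's $\rs$-convention), but yours trades the paper's self-contained inequality manipulation, which uses nothing beyond Lemma~\ref{L1}, for a cleaner group computation at the price of importing Lemma~\ref{vnoreni2} from the embedding section---a legitimate move, since that lemma precedes Lemma~\ref{L6}, though note that you need its existence half (that $(\beta\ra 1)\ra 1\in J(\beta)$), which the lemma asserts but whose proof the paper there leaves to the reader.
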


\begin{proof}
Let $M$ be the set above, i.e., $x\in M$ iff $\alpha\ra x=1$ (or equivalently, $\alpha\rs x=1$) for some $\alpha=a_1\dots a_n$ with $a_1,\dots,a_n\in S\cup (S\ra 1)$. 

(i) For any $a\in S$, we have $(a\ra 1)\ra (a\ra 1)=1$, so $1\in M$. 

(ii) Let $x,x\ra y\in M$, i.e., $\alpha\rs (x\ra y)=1$ and $\beta\rs x=1$ for some non-empty words $\alpha,\beta$ over $S\cup (S\ra 1)$. By Lemma \ref{L1}, $x\ra (\alpha\rs y)=\alpha\rs (x\ra y)=1$, so $x\leq \alpha\rs y$ which yields 
$1=\beta\rs x \leq \beta\rs (\alpha \rs y)=\alpha\beta\rs y$. Hence $\alpha\beta\rs y=1$ and $y\in M$. 

(iii) Let $x\in M$, i.e., $a_1\dots a_n\ra x=1$ for some $a_1,\dots,a_n\in S\cup (S\ra 1)$. Then, using Lemma \ref{L1} \eqref{vl11}, we have 
\begin{align*}
1 & =(a_1\dots a_n\ra x)\ra 1\\
& = [a_1\ra (\dots\ra (a_n\ra x)\dots)]\ra 1\\
& =(a_1\ra 1)\rs (\dots\rs [(a_n\ra1 )\rs (x\ra 1)]\dots).
\end{align*}
If $a_i\in S$, then $a_i\ra 1\in S\cup (S\ra 1)$. On the other hand, if $a_i\in S\ra 1$, then $a_i = b_i\ra 1$ for some $b_i \in S$, and since $b_i\leq (b_i\ra 1)\ra 1=a_i\ra 1$, when replacing $a_i\ra 1$ with $b_i$ we get
\begin{align*}
1 & = (a_1\ra 1)\rs (\dots\rs [(a_i\ra 1)\rs (\dots\rs [(a_n\ra 1)\rs (x\ra 1)]\dots)]\dots)\\
& \leq (a_1\ra 1)\rs (\dots\rs [b_i\rs (\dots\rs [(a_n\ra 1)\rs (x\ra 1)]\dots)]\dots)
\end{align*} 
by Lemma \ref{L1} \eqref{vl5}, \eqref{vl9}.
Repeating this procedure for each $i=1,\dots,n$ we find $c_1,\dots c_n\in S\cup (S\ra 1)$ such that $c_1\rs (\dots\rs (c_n\rs (x\ra 1))\dots) = 1$, thus $x\ra 1\in M$. 

We have proved that $M\in \pfil A$. Clearly, $S\subseteq M$. If $P\in\pfil A$ is such that $S\subseteq P$, then also $S\ra 1\subseteq P$ and it follows that $M\subseteq P$. 
Thus $M=\mathrm{Pg}(S)$.
\end{proof}

\begin{rem}
We know that $I_{\alg A}$ is a filter of $\alg A$; in fact, $I_{\alg A}$ is the kernel of the homomorphisms $\gamma$ and $\delta$ that are defined by \eqref{gamma} and \eqref{delta}, respectively. However, $G_{\alg A}$ is neither a filter nor a prefilter of $\alg A$ in general. 
Consequently, though $\alg{G}_{\alg A}$ is a subalgebra the prefilters of which are precisely the subgroups of the group $(G_{\alg A},\cdot,1)$, the prefilter of $\alg A$ generated by a subgroup $S\subseteq G_{\alg A}$ need not be contained in $G_{\alg A}$.
\end{rem}

\begin{cor}
For any pseudo-BCI-algebra $\alg A$, the filter lattice $\bfil A$ is a complete sublattice of the prefilter lattice $\bpfil A$.
\end{cor}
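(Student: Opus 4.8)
The plan is to prove that the inclusion $\fil A\subseteq\pfil A$ preserves arbitrary meets and joins, so that $\bfil A$ is closed under the operations of $\bpfil A$. Meets cause no trouble: the intersection of a family of filters again satisfies (i)--(iv), and it is at the same time the infimum in $\bfil A$ and in $\bpfil A$; thus meets agree. All the real work is in the joins.

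Fix $\{F_i\}\subseteq\fil A$ and set $S=\bigcup_i F_i$. The join of the $F_i$ in $\bpfil A$ is $\mathrm{Pg}(S)$, their join in $\bfil A$ is the least filter containing $S$, and since every filter is a prefilter we automatically have $\mathrm{Pg}(S)\subseteq\bigvee_{\bfil A}F_i$. Equality --- and with it the whole corollary --- will follow as soon as I show that $P:=\mathrm{Pg}(S)$ is a filter. Here Lemma \ref{L6} helps: each $F_i$ satisfies (iii), so $S\ra 1=\bigcup_i(F_i\ra 1)\subseteq S$, and the generating formula collapses to $P=\{x\in A\mid a_1\cdots a_n\ra x=1\text{ for some }a_1,\dots,a_n\in S,\ n\in\N\}$, which is a prefilter, i.e. $P$ satisfies (i)--(iii). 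By the characterisation of filters recalled just before Proposition \ref{prop-idealterms} it then remains only to verify condition (v): $(b\ra a)\ra a\in P$ and $(b\rs a)\rs a\in P$ for all $b\in P$, $a\in A$.

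I would establish $(b\ra a)\ra a\in P$ by induction on the length $m$ of a word $c_1\cdots c_m$ over $S$ witnessing $b\in P$, the case of the empty word ($b=1$) being immediate. For the step, note that $c_1\cdots c_{m-1}$ witnesses $c_m\ra b\in P$, so the induction hypothesis gives $u:=((c_m\ra b)\ra a)\ra a\in P$. Now $c_m$ belongs to some $F_j$, which under $\bfil A\cong\bconq{PBCI}{A}$ is the kernel of a relative congruence $\theta_j$ with $c_m\equiv_{\theta_j}1$. Since $\theta_j$ is a congruence, $c_m\ra b\equiv_{\theta_j}1\ra b=b$ and hence $u\equiv_{\theta_j}(b\ra a)\ra a$; in particular $u\ra\big((b\ra a)\ra a\big)\in F_j\subseteq P$. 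As $u\in P$ and $P$ is closed under modus ponens (ii), we conclude $(b\ra a)\ra a\in P$. The companion clause $(b\rs a)\rs a\in P$ follows symmetrically: by Lemma \ref{L1}(4),(6) the set $P$ is equally the prefilter of $\alg A^\dagger$ generated by $S$, hence closed under modus ponens for $\rs$ as well, and $\fil A=\fil{A^\dagger}$ makes the hypotheses invariant under interchanging $\ra$ and $\rs$.

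The hard part is exactly this last point --- that arrow-symmetry, i.e. the filter property, survives prefilter generation --- and it genuinely requires the $F_i$ to be filters rather than mere prefilters. This is visible already for a p-semisimple $\alg A$, which is term equivalent to a group whose prefilters are the subgroups and whose filters are the normal subgroups: the prefilter (subgroup) generated by two non-normal subgroups need not be normal, so the weaker condition $S\ra 1\subseteq S$ is nowhere near sufficient. In the induction the filter property is invoked at a single, localised place --- the replacement of $c_m\ra b$ by $b$ through the congruence $\theta_j$ --- and it is precisely this step that has no analogue for arbitrary prefilters.
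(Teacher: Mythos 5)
Your proof is correct, but it takes a genuinely different route from the paper's. The paper argues at the level of binary joins: for filters $[1]_\phi,[1]_\psi$ it invokes \eqref{spojeniidealu} (available because $\var{PBCI}$ is relatively ideal determined) to write any $a$ in the filter join as $(a,1)\in\phi\circ\psi$, produces the intermediate element $b$ with $(a,b)\in\phi$ and $(b,1)\in\psi$, and then places $a$ in the prefilter join by two applications of modus ponens, using $b\ra((b\ra a)\rs a)=1$; the \emph{complete} sublattice property is then deduced from compactness of the two algebraic lattices. You instead prove the stronger structural fact that $\mathrm{Pg}\bigl(\bigcup_i F_i\bigr)$ is itself a filter, by induction on the length of the witnessing words of Lemma \ref{L6}, with the congruence replacement $c_m\equiv_{\theta_j}1$ carrying the induction step. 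What your route buys: arbitrary joins are handled in one stroke (no compactness step), and the argument is self-contained at the level of Lemmas \ref{L1} and \ref{L6} plus the definition of filters as kernels of relative congruences, bypassing \eqref{spojeniidealu} entirely. What it costs: your induction essentially re-proves, in this special case, the same generator-replacement trick that the paper's proof of \eqref{spojeniidealu} performs once and for all; since \eqref{spojeniidealu} is the engine of the whole section (it also drives the arguesian property and the construction of the subtractive term), the paper's proof is shorter in context. Two minor points, neither a gap: your ``empty word'' base case departs harmlessly from Lemma \ref{L6}, whose words are non-empty (alternatively, the case $m=1$ is immediate, since $c_1\ra b=1$ and $c_1\in F_j$ force $b\in F_j$, and $F_j$ already satisfies (v)); and in the symmetric $\rs$-argument one must peel the witnessing word from the opposite end because of the reversed bracketing convention for $\rs$-words, which is exactly what passing to $\alg A^\dagger$ accomplishes.
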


\begin{proof}
Let $\phi,\psi \in \conq{PBCI}{A}$. Let $\vee$ and $\sqcup$ denote the join in $\bfil A$ and in $\bpfil A$, respectively.
By \eqref{spojeniidealu}, if $a\in [1]_\phi \vee [1]_\psi$, then $(a,1)\in\phi\circ\psi$, so there exists $b\in A$ such that $(a,b)\in\phi$ and $(b,1)\in\psi$. Then $b\ra a\in [1]_\phi$, $b\in [1]_\psi$, and since $b\ra ((b\ra a)\rs a)=1$, it follows that $a\in \mathrm{Pg} ([1]_\phi \cup [1]_\psi) = [1]_\phi \sqcup [1]_\psi$. Thus $[1]_\phi \vee [1]_\psi\subseteq [1]_\phi \sqcup [1]_\psi$; the converse inclusion is evident, and hence $\bfil A$ is a sublattice of $\bpfil A$.
Compactness entails that it is actually a complete sublattice.
\end{proof}


\section{The direct product of $\alg{I}_{\alg A}$ and $\alg{G}_{\alg A}$}
\label{sect-direktnisoucin}

In this section, we generalize another result of \cite{RvA}; namely, we characterize pseudo-BCI-algebras $\alg A$ that are isomorphic to the direct product $\alg I_{\alg A} \times \alg G_{\alg A}$. The necessary and sufficient condition given in \cite[Theorem 20]{RvA} is associativity of the operation which is defined by the same rule as the group multiplication on $G_{\alg A}$.
When $\alg{A}$ is a proper pseudo-BCI-algebra, then for $x,y\in G_{\alg A}$ we have 
\begin{align*}
x\cdot y &= (x\ra 1)\rs y=(x\ra 1)\rs ((y\rs 1)\ra 1)\\
&= (y\rs 1)\ra ((x\ra 1)\rs 1)=(y\rs 1)\ra x, 
\end{align*}
but the elements $(x\ra 1)\rs y$ and $(y\rs 1)\ra x$ may be distinct if $x,y\notin G_{\alg A}$. Therefore, for any $x,y\in A$ we define
\[
x\ci y=(x\ra 1)\rs y \quad\text{and}\quad  x\st y=(y\rs 1)\ra x.
\]
The operations $\ci$ and $\st$ need not coincide even when $\alg A$ is a BCI-algebra, but in BCI-algebras we have $x\ci y=y\st x$ for all $x,y\in A$, which fails to be true in pseudo-BCI-algebras.

\begin{lem}\label{L3}
In any pseudo-BCI-algebra $\alg{A}$, for all $x,y,z\in A$:
\begin{enumerate}[\indent\upshape (1)]
\item $1\ci x = x = x\st 1$;
\item $x\ci ( x\ra 1) = 1 = (x\rs 1)\st x$;
\item $x\ci (y\st z) = (x\ci y)\st z$;
\item $(x\ci y)\ci z \leq x\ci (y\ci z)$;
\item $x\st (y\st z) \leq (x\st y)\st z$.
\end{enumerate}
\end{lem}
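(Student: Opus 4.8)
Items (1)--(3) are immediate computations from the arithmetic of Lemma~\ref{L1}. For (1), $1\ci x=(1\ra 1)\rs x=1\rs x=x$ and $x\st 1=(1\rs 1)\ra x=1\ra x=x$ by \eqref{vl1}, \eqref{rpom2a} and \eqref{rpom2b}. For (2), $x\ci(x\ra 1)=(x\ra 1)\rs(x\ra 1)=1$ and $(x\rs 1)\st x=(x\rs 1)\ra(x\rs 1)=1$, again by \eqref{vl1}. For (3), both sides unfold to the same expression: $x\ci(y\st z)=(x\ra 1)\rs((z\rs 1)\ra y)$ while $(x\ci y)\st z=(z\rs 1)\ra((x\ra 1)\rs y)$, and these agree by the commuting law \eqref{vl6}, namely $a\ra(b\rs c)=b\rs(a\ra c)$ with $a=z\rs 1$, $b=x\ra 1$, $c=y$.

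The substance of the lemma is the associativity inequality (4) (its mirror (5) will follow formally). The plan is first to put both sides of (4) into a comparable normal form. Since $x\ci y=(x\ra 1)\rs y$, I would compute $(x\ci y)\ra 1$ via \eqref{vl10} and \eqref{vl11}, obtaining $(x\ci y)\ra 1=((x\ra 1)\rs y)\rs 1=((x\ra 1)\ra 1)\ra(y\ra 1)$. Writing $g=x\ra 1$ and $h=y\ra 1$, both of which lie in the group part $G_{\alg A}$, this element is the group product $h\cdot g=(h\ra 1)\rs g$ (recall that $\alg G_{\alg A}$ is term-equivalent to a group with $g\cdot h=(g\ra 1)\rs h$). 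Hence (4) becomes $((h\ra 1)\rs g)\rs z\le g\rs(h\rs z)$. I would then strip this down by adjunction: by \eqref{vl7} it is equivalent to $g\le\bigl(((h\ra 1)\rs g)\rs z\bigr)\ra(h\rs z)$; applying \eqref{vl6} to pull $h\rs(\cdot)$ outside and then \eqref{vl7} once more, the goal reduces to $h\le g\ra\bigl(P\ra z\bigr)$, where $P=((h\ra 1)\rs g)\rs z$. Now \eqref{vl3} gives $P\ra z\ge(h\ra 1)\rs g$, so by monotonicity \eqref{vl9} it suffices to verify $h\le g\ra((h\ra 1)\rs g)$; and here \eqref{vl6} with \eqref{vl1} collapses the right-hand side, $g\ra((h\ra 1)\rs g)=(h\ra 1)\rs(g\ra g)=(h\ra 1)\rs 1=(h\ra 1)\ra 1=h$, the last step because $h=y\ra 1\in G_{\alg A}$ so $(h\ra 1)\ra 1=h$ by Lemma~\ref{lem-grcast}. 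This settles (4).

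For (5) I would pass to the dual pseudo-BCI-algebra $\alg A^\dagger=(A,\rs,\ra,1)$. Carrying the definitions of $\ci$ and $\st$ over to $\alg A^\dagger$, one checks directly that its ``$\ci$'' and ``$\st$'' are exactly $\st$ and $\ci$ with the arguments reversed; consequently (4) applied inside $\alg A^\dagger$ reads $z\st(y\st x)\le(z\st y)\st x$, which after renaming the variables is precisely (5). The main obstacle is the adjunction chain in (4): it only goes through once both sides are rewritten through $G_{\alg A}$ so that the inner term carries a $\rs$, letting \eqref{vl6} fire and reducing everything to the identity $g\ra((h\ra 1)\rs g)=h$. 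Keeping the ``wrong'' normal form, e.g. leaving $(x\ci y)\ra 1$ as the double-arrow expression $((x\ra 1)\ra 1)\ra(y\ra 1)$, produces an iterated $\ra$ that the available laws do not simplify, which is why recognizing the group product $h\cdot g$ is the key move.
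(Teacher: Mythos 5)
Your proof is correct, but for the core item (4) it takes a genuinely different route from the paper. The paper's argument stays entirely inside Lemma~\ref{L1}: starting from $x\ra 1\leq ((x\ra 1)\rs y)\ra y$ (property \eqref{vl3}), it applies \eqref{vl2} twice to build the single ascending chain $x\ra 1\leq ((x\ci y)\ci z)\ra (y\ci z)$, and then one application of the adjunction \eqref{vl7} yields $(x\ci y)\ci z\leq x\ci (y\ci z)$; the group part $G_{\alg A}$ is never mentioned. You instead normalize $(x\ci y)\ra 1$ through the group part---using \eqref{vl10}, \eqref{vl11}, the term equivalence of $\alg{G}_{\alg A}$ with a group, and Lemma~\ref{lem-grcast}---to rewrite the goal as $((h\ra 1)\rs g)\rs z\leq g\rs (h\rs z)$ with $g=x\ra 1$, $h=y\ra 1$, and then descend by adjunctions \eqref{vl7}, \eqref{vl6}, \eqref{vl3}, \eqref{vl9} to the identity $g\ra ((h\ra 1)\rs g)=h$; every step checks out. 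What each approach buys: the paper's proof is shorter and needs only the arithmetic of Lemma~\ref{L1}, whereas yours makes the group-theoretic content of the construction explicit (the element $(x\ci y)\ra 1$ really is the product $h\cdot g$ in $G_{\alg A}$), which is conceptually illuminating given that $\ci$ is designed to extend the group multiplication. Your derivation of (5) from (4) via the dual algebra $\alg A^\dagger$ is also a clean formalization of the paper's ``the proof of (5) is analogous.'' One caveat: your closing claim that the ``double-arrow'' normal form $((x\ra 1)\ra 1)\ra (y\ra 1)$ is a dead end and that recognizing the group product is \emph{the} key move is not accurate---the paper's proof avoids any such normalization and succeeds with \eqref{vl2} alone---but this is commentary on strategy, not a flaw in your argument.
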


\begin{proof}
(1) and (2) are trivial calculations.
Also (3) is obvious because by Lemma \ref{L1} (6) we have 
$x\ci (y\st z) = (x\ra1)\rs((z\rs1)\ra y) = (z\rs1)\ra((x\ra1)\rs y) = (x\ci y)\st z$.
For (4), we have 
\begin{align*}
x\ra 1 &\leq ((x\ra 1)\rs y)\ra y && \text{by Lemma \ref{L1} \eqref{vl3}}\\
&\leq (y\ra 1)\rs (((x\ra 1)\rs y)\ra 1) && \text{by Lemma \ref{L1} \eqref{vl2}}\\ 
&\leq [(((x\ra 1)\rs y)\ra 1)\rs z]\ra ((y\ra 1)\rs z) && \text{by Lemma \ref{L1} \eqref{vl2}}\\
&=((x\ci y)\ci z)\ra (y\ci z)
\end{align*}
which is equivalent to $(x\ci y)\ci z \leq (x\ra1)\rs (y\ci z) = x\ci (y\ci z)$ by Lemma \ref{L1} \eqref{vl7}.
The proof of (5) is analogous.
\end{proof}

\begin{rem}
If $x\ci y=x\st y$ for all $x,y\in A$, then $(x\ra 1)\rs 1=x\ci 1=x\st 1=x$ for every $x\in A$, and so $x\in G_{\alg A}$. Thus the operations $\ci$ and $\st$ coincide if and only if $G_{\alg A}=A$. 
The identity $x\st (y\ci z)=(x\st y)\ci z$ obtained by switching $\ci$ and $\st$ in (3) does not hold true in general; it actually entails that $\ci$ and $\st$ coincide because $x\st z=x\st (1\ci z)=(x\st 1)\ci z=x\ci z$ for all $x,z\in A$.
\end{rem}

\begin{rem}
The operations $\cd$ and $\ra$ (or $\st$ and $\rs$) do not satisfy the residuation law \eqref{rl} in general. It is easy to show that $x\cd y\leq z$ implies $x\leq y\ra z$ (and $x\st y\leq z$ implies $y\leq x\rs z$), but the converse implication holds only if $G_{\alg A}=A$.
Indeed, assuming that $x\cd y\leq z$ iff $x\leq y\ra z$ for all $x,y,z\in A$, we see that for any $x\in A$, $x=1\ra x$ entails $x\leq (x\ra 1)\rs 1=x\cd 1\leq x$, thus $x\in G_{\alg A}$.
\end{rem}

\begin{lem}\label{lemJ1}
For any pseudo-BCI-algebra $\alg{A}$, the following are equivalent:
\begin{enumerate}[\indent\upshape (1)]
\item
the operation $\ci$ is associative; 
\item
$(g\ci h)\ci x=g\ci (h\ci x)$ for all $g,h\in G_{\alg A}$ and $x\in A$;
\item
$g^{-1} \rs (g\rs x)=x$ for all $g\in G_{\alg A}$ and $x\in A$;
\item
the operation $\st$ is associative;
\item
$x\st (h\st g)=(x\st h)\st g$ for all $g,h\in G_{\alg A}$ and $x\in A$;
\item
$g^{-1} \ra (g\ra x)=x$ for all $g\in G_{\alg A}$ and $x\in A$.
\end{enumerate}
\end{lem}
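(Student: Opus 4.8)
The plan is to translate all six conditions into the language of the two operations $\ci,\st$ and the group $G_{\alg A}$, settle the two ``triples'' $(1)$--$(3)$ and $(4)$--$(6)$ separately, and then bridge them with a single cross-implication. First I set up the dictionary. For $g\in G_{\alg A}$ one has $g\ra 1=g\rs 1=g^{-1}$, so $g\ci x=g^{-1}\rs x$, $g\rs x=g^{-1}\ci x$, $g\ra x=x\st g^{-1}$ and $x\st g=g^{-1}\ra x$; moreover $g\ci g^{-1}=1=g^{-1}\st g$ by Lemma~\ref{L3}(2) and $1\ci x=x=x\st 1$ by Lemma~\ref{L3}(1). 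Using Lemma~\ref{L1}\eqref{vl11} one verifies the \emph{reduction rule} $x\ci y=\delta(x)\ci y$ and, dually, $x\st y=x\st\delta(y)$, where $\delta(x)=(x\ra 1)\ra 1\in G_{\alg A}$ is the unique maximal element above $x$ (Lemma~\ref{lem-grcast}); thus the left factor of $\ci$ and the right factor of $\st$ depend only on their images in $G_{\alg A}$. Through this dictionary, $(3)$ becomes $g\ci(g^{-1}\ci x)=x$ and $(6)$ becomes $(x\st g^{-1})\st g=x$. Finally, Lemma~\ref{L3}(4) applied to the triple $g,g^{-1},x$ gives the automatic inequality $x\le g\ci(g^{-1}\ci x)$, and Lemma~\ref{L3}(5) gives $x\le(x\st g^{-1})\st g$, so $(3)$ and $(6)$ are exactly the \emph{reverse} inequalities.

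I then prove $(1)\Leftrightarrow(2)\Leftrightarrow(3)$. Here $(1)\Rightarrow(2)$ is a specialization, and $(2)\Rightarrow(3)$ follows by putting $h=g^{-1}$ in $(2)$ and using $g\ci g^{-1}=1$ and $1\ci x=x$. The substantial step is $(3)\Rightarrow(1)$: by the reduction rule, full associativity follows once I show $(g\ci y)\ci z=g\ci(y\ci z)$ for $g\in G_{\alg A}$ and arbitrary $y,z$. Lemma~\ref{L3}(4) already gives ``$\le$''; for ``$\ge$'' I apply Lemma~\ref{L3}(4) to the triple $g^{-1},g\ci y,z$ and cancel the leading $g^{-1},g$ by $(3)$ to get $y\ci z\le g^{-1}\ci((g\ci y)\ci z)$. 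Since $z\mapsto g\ci z=g^{-1}\rs z$ is isotone by Lemma~\ref{L1}\eqref{vl9}, applying it and cancelling once more by $(3)$ yields $g\ci(y\ci z)\le(g\ci y)\ci z$, and antisymmetry closes the step.

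Next I obtain $(4)\Leftrightarrow(5)\Leftrightarrow(6)$ for free from the dual algebra $\alg A^\dagger=(A,\rs,\ra,1)$. A direct check shows that the operations ``$\ci$'' and ``$\st$'' computed in $\alg A^\dagger$ are $x\mapsto y\st x$ and $x\mapsto y\ci x$, i.e. the opposites of $\st$ and $\ci$; that $G_{\alg A^\dagger}=G_{\alg A}$ with the same inverses; and hence that conditions $(1),(2),(3)$ read in $\alg A^\dagger$ are precisely $(4),(5),(6)$ read in $\alg A$ (associativity being preserved when one passes to the opposite operation). Applying the equivalence of the first triple, already established, to $\alg A^\dagger$ therefore delivers the equivalence of the second.

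It remains to link the two triples by one equivalence, say $(3)\Leftrightarrow(6)$, i.e. associativity of $\ci$ $\Leftrightarrow$ associativity of $\st$; \emph{this is the main obstacle}. The difficulty is that the two triples involve the two residuals separately and are coupled only through the unconditional medial law Lemma~\ref{L3}(3) and the exchange identities Lemma~\ref{L1}\eqref{vl6}, \eqref{vl7}, and these formal laws do not by themselves force associativity of one operation from that of the other. My plan is to exploit $(3)$ to make $z\mapsto g\ci z$ an order-automorphism and, via Lemma~\ref{L3}(3), to commute it past the right $\st$-translations; writing $x=\delta(x)\ci b$ with $b=\delta(x)^{-1}\ci x\in I_{\alg A}$ this reduces the reverse inequality $(x\st g^{-1})\st g\le x$ of $(6)$ to the case $x\in I_{\alg A}$, where $\delta(x)=1$. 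On the integral part one has $g^{-1}\rs(g\rs x)=x$ from $(3)$, and the exchange identities together with Lemma~\ref{L1}\eqref{vl7} already furnish one inequality between $g^{-1}\ra(g\ra x)$ and $g^{-1}\rs(g\rs x)$; the crux is to match the $\ra$- and $\rs$-cancellations there, which is exactly the point at which the concrete arithmetic of pseudo-BCI-algebras, rather than the abstract operation laws, must be brought to bear.
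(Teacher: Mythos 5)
Your handling of the two triples is essentially correct. The dictionary, the reduction rule $x\ci y=\delta(x)\ci y$, and the observation that (3) and (6) are precisely the reverse inequalities of Lemma~\ref{L3}(4),(5) all check out; your proof of (3)$\Rightarrow$(1) --- apply Lemma~\ref{L3}(4) to the triple $g^{-1},\,g\ci y,\,z$, cancel with (3), then use isotonicity of $w\mapsto g\ci w$ (Lemma~\ref{L1}\eqref{vl9}) and cancel again --- is valid, and is in fact more economical than the paper's argument, which first derives the auxiliary identity $x\ra (y\ra 1)=(x\ci y)\ra 1$ and then invokes Lemma~\ref{L1}\eqref{vl8}. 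Likewise, the dualization through $\alg A^\dagger$ giving (4)$\Leftrightarrow$(5)$\Leftrightarrow$(6) is correct and makes precise what the paper merely calls ``parallel.''

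The genuine gap is that you never prove the cross-implication (3)$\Leftrightarrow$(6): you call it ``the main obstacle,'' sketch a reduction, and then concede that ``the crux is to match the $\ra$- and $\rs$-cancellations.'' That concession is exactly the missing idea, so what you have is two disjoint three-way equivalences, not the lemma. Moreover, your reduction to $x\in I_{\alg A}$ (which does go through, since $x=\delta(x)\ci b$ with $b=\delta(x)\rs x\in I_{\alg A}$ and Lemma~\ref{L3}(3) commutes $\delta(x)\ci(-)$ past $\st\,g$) buys nothing: the arrow-mixing problem for $b\in I_{\alg A}$ is the same as for arbitrary $x$, and you offer no argument for it there either. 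The paper closes the bridge with one short computation you are missing: by Lemma~\ref{L1}\eqref{vl3}, \eqref{vl6}, \eqref{vl7}, every pseudo-BCI-algebra satisfies the unconditional identity
\[
x\rs\bigl(y\rs\bigl[(y\ra (x\ra z))\rs z\bigr]\bigr)=1 ;
\]
instantiating $x:=g$, $y:=g^{-1}$, $z:=x$ and using (3) in the form $g\rs (g^{-1}\rs w)=w$ to strip the prefix yields $(g^{-1}\ra (g\ra x))\rs x=1$, i.e. $g^{-1}\ra (g\ra x)\leq x$, while the reverse inequality $x=1\ra x\leq (g\ra 1)\ra (g\ra x)$ is automatic from Lemma~\ref{L1}\eqref{vl8}; switching the two arrows gives (6)$\Rightarrow$(3). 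Without this step (or some substitute for it), the six-way equivalence is not established.
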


\begin{proof}
(1) $\Rightarrow$ (2). Trivial.

(2) $\Rightarrow$ (3). Since $g\cdot g^{-1}=1$, we have $x = 1\ci x = (g\ci g^{-1})\ci x = g\ci (g^{-1}\ci x) = (g\ra 1)\rs ((g^{-1}\ra 1)\rs x) = g^{-1} \rs (g\rs x)$.

(3) $\Rightarrow$ (1). We first note that $x\ra 1=((x\ra 1)\rs y)\ra y$ for all $x,y\in A$, because $x\ra 1\leq ((x\ra 1)\rs y)\ra y$ and $x\ra 1\in G_{\alg A}$ is a maximal element. Then 
\begin{align}
\begin{split}\label{slbguign346}
x\ra (y\ra 1) &=x\ra (y\rs 1)=y\rs (x\ra 1)= y\rs [((x\ra 1)\rs y)\ra y]\\
&=((x\ra 1)\rs y)\ra (y\rs y)=((x\ra 1)\rs y)\ra 1\\
&=(x\ci y)\ra 1.
\end{split}
\end{align}
Further, since $y\ra 1\in G_{\alg A}$ and $(y\ra 1)^{-1}\rs ((y\ra 1)\rs z)=z$ by (3), we have
\begin{alignat*}{2}
x\ci (y\ci z) &= (x\ra 1)\rs ((y\ra 1)\rs z)\\
&\leq [(y\ra 1)^{-1}\rs (x\ra 1)]\rs [(y\ra 1)^{-1}\rs ((y\ra 1)\rs z)]  
& \hspace{3mm} & \text{by Lem. \ref{L1} (8)}\\
&=[x\ra ((y\ra 1)^{-1}\rs 1)]\rs z\\
&=(x\ra (y\ra 1))\rs z\\
&=((x\ci y)\ra 1)\rs z && \text{by \eqref{slbguign346}}\\
&=(x\ci y)\ci z.
\end{alignat*}
The converse inequality holds by Lemma \ref{L3} (4), and hence $x\ci (y\ci z)=(x\ci y)\ci z$.

The proof of (4) $\Leftrightarrow$ (5) $\Leftrightarrow$ (6) is parallel to the above proof of (1) $\Leftrightarrow$ (2) $\Leftrightarrow$ (3), hence it remains to show that, e.g., (3) $\Leftrightarrow$ (6).
For any $x,y,z\in A$, 
\[
y\leq (y\ra (x\ra z))\rs (x\ra z)=x\ra [(y\ra (x\ra z))\rs z]
\]
implies $x\leq y\rs [(y\ra (x\ra z))\rs z]$ (by Lemma \ref{L1} \eqref{vl3}, \eqref{vl6} and \eqref{vl7}), and hence 
\[
x\rs (y\rs [(y\ra (x\ra z))\rs z])=1.
\]
In particular, for any $g\in G_{\alg A}$ and $x\in A$, 
$g\rs (g^{-1}\rs [(g^{-1}\ra (g\ra x))\rs x])=1$. 
But if $\alg{A}$ fulfills (3), then we also have 
\[
g\rs (g^{-1}\rs [(g^{-1}\ra (g\ra x))\rs x])=(g^{-1}\ra (g\ra x))\rs x,
\] 
so $(g^{-1}\ra (g\ra x))\rs x=1$ which proves $g^{-1}\ra (g\ra x)\leq x$. 
The converse inequality is clear: $x=1\ra x\leq (g\ra 1)\ra (g\ra x)=g^{-1}\ra (g\ra x)$. Thus $\alg{A}$ satisfies (6) whenever it satisfies (3). To prove the converse implication, it suffices to switch the arrows.
\end{proof}

Theorem \ref{thm-direktnirozklad} below generalizes \cite[Theorem 20]{RvA} which states that a BCI-algebra $\alg{A}$ is isomorphic to the direct product $\alg{I}_{\alg A} \times \alg{G}_{\alg A}$ if and only if the operation $\st$ is associative.
Obviously, if $\alg{A}$ is a BCI-algebra, then $\alg{I}_{\alg A}$ is a BCK-algebra, $\alg{G}_{\alg A}=\alg{G}_{\alg A}^{\dagger}$ and $x\ci y=y\st x$ for all $x,y\in A$, so the group $(G_{\alg A},\cdot,1)$ is abelian.

First an easy lemma that will be used in proving the theorem:

\begin{lem}\label{L4}
Let $\alg{A}$ be a pseudo-BCI-algebra. Then for all $x\in A$ and $g, h\in G_{\alg A}$:
\begin{enumerate}[\indent\upshape (1)]
\item $x\ra g = (g\ra x)\ra 1$, $x\rs g = (g\rs x)\ra 1$;
\item $g\ra x = x \st g^{-1}$, $g \rs x = g^{-1}\ci x$;
\item $(x\ra g)\ra 1=x\ci g^{-1}$, $(x\rs g)\ra 1=g^{-1}\st x$.
\end{enumerate}
\end{lem}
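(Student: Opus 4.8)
The plan is to unfold the definitions $x\ci y=(x\ra 1)\rs y$ and $x\st y=(y\rs 1)\ra x$ and then reduce each claimed equality to a routine identity in $\ra$ and $\rs$, using the arithmetic of Lemma \ref{L1} together with the representations of a group element. The fact I would exploit throughout is that for $g\in G_{\alg A}$ we have $g^{-1}=g\ra 1=g\rs 1$ by Lemma \ref{L1} \eqref{vl10}, and that $g$ admits all four forms $g=(g\ra 1)\ra 1=(g\ra 1)\rs 1=(g\rs 1)\ra 1=(g\rs 1)\rs 1$: the outer form is the characterization $g=(g\ra 1)\ra 1$ of Lemma \ref{lem-grcast}, and the remaining ones follow by applying \eqref{vl10} to $g\ra 1$ and to $g\rs 1$. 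These representations are exactly what let $g$ interact cleanly with the maps $(\cdot\ra 1)$ and $(\cdot\rs 1)$.

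For (1) and (3) I would show that both sides of each equality collapse to the single expression $(g\ra 1)\rs(x\ra 1)$ (respectively its mirror $(g\rs 1)\ra(x\rs 1)$). For the first identity of (1), writing $g=(g\ra 1)\rs 1$ and applying Lemma \ref{L1} \eqref{vl6} gives $x\ra g=x\ra((g\ra 1)\rs 1)=(g\ra 1)\rs(x\ra 1)$, while Lemma \ref{L1} \eqref{vl11} gives $(g\ra x)\ra 1=(g\ra 1)\rs(x\ra 1)$; comparing the two yields the identity. The first identity of (3) is even more immediate, since $x\ci g^{-1}=(x\ra 1)\rs(g\ra 1)$ by definition (using $g^{-1}=g\ra 1$) and $(x\ra g)\ra 1=(x\ra 1)\rs(g\ra 1)$ again by \eqref{vl11}. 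The $\rs$-versions in (1) and (3) follow symmetrically, now using the second identity of \eqref{vl11}, the interchange \eqref{vl10} to turn a trailing $\rs 1$ into $\ra 1$, and the $\ra\leftrightarrow\rs$ dual of \eqref{vl6} (legitimate since $\alg A^\dagger$ is again a pseudo-BCI-algebra).

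For (2) the one genuine computation is $g^{-1}\rs 1=(g\ra 1)\rs 1=g$ and $g^{-1}\ra 1=(g\rs 1)\ra 1=g$, both of which are just instances of the group-element representations above. Granting these, $x\st g^{-1}=(g^{-1}\rs 1)\ra x=g\ra x$ and $g^{-1}\ci x=(g^{-1}\ra 1)\rs x=g\rs x$ by mere substitution. I do not expect a real obstacle anywhere, as the whole lemma is bookkeeping; the only point requiring care is selecting, at each occurrence, the representation of $g$ (out of the four available) that makes \eqref{vl6} or \eqref{vl11} fire directly, and keeping the uses of \eqref{vl10} straight when passing between $(\cdot\ra 1)$ and $(\cdot\rs 1)$ on elements of $G_{\alg A}$. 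Once the $\ra$-half of each item is settled, its $\rs$-half is the verbatim $\ra\leftrightarrow\rs$ mirror image.
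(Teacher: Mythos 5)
Your proposal is correct and follows essentially the same route as the paper's proof: unfold the definitions of $\ci$ and $\st$, use $g^{-1}=g\ra 1=g\rs 1$ and $g=(g\ra 1)\rs 1$, and reduce everything to Lemma \ref{L1} \eqref{vl6}, \eqref{vl10} and \eqref{vl11}. The only cosmetic difference is that you show both sides collapse to the common expression $(g\ra 1)\rs(x\ra 1)$ (or its mirror), whereas the paper writes each item as a single chain of equalities; the ingredients are identical.
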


\begin{proof}
We have $(g\ra x)\ra 1 = (g\ra 1) \rs (x\ra 1) = x \ra ((g\ra 1)\rs 1) = x\ra g$, and similarly,  $(g\rs x)\ra 1 = x\rs g$.
Clearly, $x\st g^{-1} = (g^{-1}\rs 1)\ra x = g\ra x$, $g^{-1}\ci x =(g^{-1}\ra 1)\rs x = g\rs x$,
$x\ci g^{-1}=(x\ra 1)\rs (g\ra 1)=(x\ra g)\ra 1$ and $g^{-1}\st x=(x\rs 1)\ra (g\rs 1)=(x\rs g)\ra 1$.
\end{proof}

\begin{thm}\label{thm-direktnirozklad}
Let $\alg A$ be a pseudo-BCI-algebra. The following statements are equivalent: 
\begin{enumerate}[\indent\upshape (1)]
\item $\alg A \cong \alg{I_A} \times \alg{G_A} \cong \alg{I_A} \times \alg{G}_{\alg{A}}^{\dagger}$;
\item $G_{\alg A}$ is a filter of $\alg A$;
\item $\alg A$ satisfies the equivalent conditions of Lemma \ref{lemJ1} and 
\begin{equation}\label{123456}
g\ra x=g\rs x
\end{equation}
for all $g\in G_{\alg A}$, $x\in I_{\alg A}$.
\end{enumerate}
\end{thm}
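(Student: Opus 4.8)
The plan is to prove the cycle of implications $(1)\Rightarrow(2)\Rightarrow(3)\Rightarrow(1)$, which matches the direct-product characterisation strategy used in \cite{RvA}. Throughout I will use freely that $\gamma$ and $\delta$ from \eqref{gamma} and \eqref{delta} are homomorphisms with kernel $I_{\alg A}$, that $I_{\alg A}$ is a filter, and the arithmetic of $\ci,\st$ established in Lemma \ref{L3} and Lemma \ref{L4}.

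For $(1)\Rightarrow(2)$: if $\alg A\cong \alg I_{\alg A}\times\alg G_{\alg A}$, then under this isomorphism $G_{\alg A}$ corresponds to $\{1\}\times G_{\alg A}$, which is the kernel of the projection onto the first coordinate followed by the inclusion of $\alg I_{\alg A}$; more directly, $G_{\alg A}=\delta^{-1}$-style fibre, i.e. the kernel of the projection $\alg A\to\alg I_{\alg A}$. Since that projection is a homomorphism into a member of $\var{PBCI}$, its kernel is the $1$-class of a relative congruence, hence a filter. The key point is simply that in a direct product the ``other factor'' $\{1\}\times G_{\alg A}$ is always a congruence kernel.

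For $(2)\Rightarrow(3)$: assuming $G_{\alg A}\in\fil{\alg A}$, I would first derive \eqref{123456}. Take $g\in G_{\alg A}$, $x\in I_{\alg A}$. Using Lemma \ref{L1} \eqref{vl4} the two sides $g\ra x$ and $g\rs x$ lie above a common element, and I expect to show they are $\theta_{G_{\alg A}}$-related to each other and to elements of $I_{\alg A}$; since $G_{\alg A}$ is a filter one can compare $(g\ra x)\ra(g\rs x)$ and $(g\rs x)\ra(g\ra x)$ and show both lie in $G_{\alg A}$, then use that they also lie in $I_{\alg A}$ (being of the form ``arrow into something below $1$'') to force equality via \eqref{rpom4}. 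For the associativity conditions of Lemma \ref{lemJ1}, I would verify condition (3) there, namely $g^{-1}\rs(g\rs x)=x$ for all $g\in G_{\alg A}$, $x\in A$: the element $g^{-1}\rs(g\rs x)$ is always $\leq x$-comparable by Lemma \ref{L1}, and membership of $G_{\alg A}$ in $\fil{\alg A}$ (closure under the deductive-system rule with the generators $g,g^{-1}$) should collapse the inequality to equality.

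For $(3)\Rightarrow(1)$: this is the constructive heart. Given associativity of $\ci$ and \eqref{123456}, I would define $\Phi\colon \alg A\to \alg I_{\alg A}\times\alg G_{\alg A}$ by $\Phi(x)=\bigl(((x\ra 1)\ra 1)\ra x,\ \delta(x)\bigr)$, i.e. sending $x$ to its integral component (see Lemma \ref{lem-intcast}) paired with its group component $\delta(x)=(x\ra 1)\ra 1$. I would check $\Phi$ is a homomorphism using Lemma \ref{L4} to rewrite products $g\ra x$, $g\rs x$ in terms of $\ci,\st$, and then invoke associativity of $\ci$ (equivalently $\st$) precisely where the two factors must decouple; \eqref{123456} is what guarantees the integral component is computed consistently whether one uses $\ra$ or $\rs$. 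Injectivity follows because the pair $\bigl(\text{integral part},\text{group part}\bigr)$ already determines $x$ via Lemma \ref{lem-grcast} and Lemma \ref{lem-intcast}, and surjectivity follows from the earlier remark that for any $b\in I_{\alg A}$, $g\in G_{\alg A}$ one can realise the pair (indeed the explicit glueing construction preceding this section shows every such pair occurs). The isomorphism $\alg I_{\alg A}\times\alg G_{\alg A}\cong \alg I_{\alg A}\times\alg G_{\alg A}^{\dagger}$ then comes from the isomorphism $g\mapsto g^{-1}$ between $\alg G_{\alg A}$ and $\alg G_{\alg A}^{\dagger}$ noted in Section \ref{sect-uvod}.

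\medskip
The main obstacle I anticipate is the homomorphism check in $(3)\Rightarrow(1)$: verifying that $\Phi$ respects $\ra$ and $\rs$ requires untangling how the integral and group parts of $x\ra y$ depend on those of $x$ and $y$, and it is exactly here that associativity of $\ci$ is needed to prevent ``cross terms'' from mixing the two coordinates. Getting \eqref{123456} to do its job — ensuring the first coordinate of $\Phi(x\ra y)$ agrees regardless of the arrow used — is the delicate bookkeeping step, and I would organise it by reducing every computation to the $\ci$/$\st$ calculus of Lemma \ref{L3}(3) and the identities of Lemma \ref{L4}, where associativity enters cleanly.
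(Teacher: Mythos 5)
Your plan reproduces the paper's architecture --- the cycle $(1)\Rightarrow(2)\Rightarrow(3)\Rightarrow(1)$, the same argument for $(1)\Rightarrow(2)$, and a map for $(3)\Rightarrow(1)$ that is exactly the inverse of the paper's $\eta\colon (a,g)\in I_{\alg A}\times G_{\alg A}\mapsto g\ra a\in A$ composed with the twist $g\mapsto g^{-1}$ --- but it stops short precisely at what you yourself call the ``constructive heart'', and that is a genuine gap, not bookkeeping. Proving $(3)\Rightarrow(1)$ means verifying the homomorphism property, i.e.\ the identities $(g\rs h)\ra (a\ra b)=(g\ra a)\ra (h\ra b)$ and $(g\ra h)\rs (a\rs b)=(g\rs a)\rs (h\rs b)$ for $g,h\in G_{\alg A}$, $a,b\in I_{\alg A}$. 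In the paper this takes real work: one first derives $(g\rs h)\rs b=g^{-1}\ra (h\ra b)$ from \eqref{123456}, Lemma \ref{L4} and associativity of $\st$, and then proves the two opposite inequalities separately using Lemma \ref{lemJ1}(6) and Lemma \ref{L1}\eqref{vl8}. Announcing that associativity will ``decouple the factors'' does not discharge this step. Moreover, your surjectivity argument is a non sequitur: the glueing construction at the end of Section \ref{sect-uvod} is a \emph{different} algebra (the paper stresses it is not a subalgebra of $\alg B\times\alg H$) and has no bearing on $\Phi$ being onto; surjectivity, like injectivity, again requires Lemma \ref{lemJ1}(6) --- the pair $(b,h)$ has preimage $h^{-1}\ra b$, equivalently $x=(x\ra 1)\ra ((x\ra 1)^{-1}\ra x)$ --- and cannot be obtained from Lemmas \ref{lem-grcast} and \ref{lem-intcast} alone.

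Your $(2)\Rightarrow(3)$ sketch, by contrast, rests on a sound idea which, once made precise, is a mild variant of the paper's proof: since $G_{\alg A}=[1]_\theta$ for $\theta=\theta_{G_{\alg A}}\in\conq{PBCI}{A}$ and $(g,1)\in\theta$, one has $g\ra x\equiv_\theta x\equiv_\theta g\rs x$, so $(g\ra x)\ra (g\rs x)$ and $(g\rs x)\ra (g\ra x)$ lie in $G_{\alg A}$, and similarly for the elements needed in Lemma \ref{lemJ1}(3)/(6); the paper instead argues directly with maximality and modus ponens in the filter $G_{\alg A}$. However, your stated reason for membership in $I_{\alg A}$ is wrong: $g\rs x$ is \emph{not} below $1$ when $g\neq 1$ (its unique maximal upper bound is $g^{-1}$), so ``arrow into something below $1$'' does not apply. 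The correct reason is Lemma \ref{L1}\eqref{vl11}: $g\ra x$ and $g\rs x$ lie under the same maximal element, hence every arrow between them is $\leq 1$. With that repair, $G_{\alg A}\cap I_{\alg A}=\{1\}$ and \eqref{rpom4} indeed force the desired equalities, so this implication is completable; the proposal fails only, but crucially, at the unperformed homomorphism computation in $(3)\Rightarrow(1)$.
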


\begin{rem}
Dymek \cite{D15} recently proved that $\alg A \cong \alg{I_A} \times \alg{G_A}$ if and only if $G_{\alg A}$ is a ``compatible deductive system'' of $\alg A$ (i.e., $G_{\alg A}$ satisfies the conditions (i), (ii) and (iv)), which is actually equivalent to $G_{\alg A}$ being a filter because $G_{\alg A}$ always satisfies the condition (iii). Our proof below is different from that in \cite{D15}.
\end{rem}

\begin{proof}[Proof of the theorem]
(1) $\Rightarrow$ (2). 
Let $\alg B$ be the direct product $\alg{I_A}\times\alg{G_A}$.
We have $\alg{I_A} \times \alg{G_A} \cong \alg{I_A} \times \alg{G}_{\alg{A}}^{\dagger}$. 
In view of Lemma \ref{lem-grcast},
the group part of $\alg B$ is $G_{\alg B}=\{(1,g)\mid g\in G_{\alg A}\}$ and any isomorphism $\eta$ between $\alg A$ and $\alg B$ maps $G_{\alg A}$ onto $G_{\alg B}$. Hence, if $\pi$ is the projection of $\alg B$ onto $\alg{I_A}$, then $G_{\alg A}$ is the kernel of the composite homomorphism $\pi\eta$ from $\alg A$ onto $\alg{I_A}$.
Thus $G_{\alg A}\in\fil A$.

(2) $\Rightarrow$ (3). 
Suppose that $G_{\alg A}$ is a filter of $\alg A$. Let $g\in G_{\alg A}$.
First, we show that $\alg A$ satisfies the condition (6) of Lemma \ref{lemJ1}.
For any $x\in A$ we have $x=1\ra x\leq (g\ra 1)\ra (g\ra x)=g^{-1}\ra (g\ra x)$, whence $1=x\rs x\geq (g^{-1}\ra (g\ra x))\rs x$. 
On the other hand, recalling Lemma \ref{L1} \eqref{vl3}, 
we get $g^{-1}\leq (g^{-1}\ra (g\ra x))\rs (g\ra x)=g\ra [(g^{-1}\ra (g\ra x))\rs x]$;
since $g^{-1}$ is a maximal element, we actually have 
$g^{-1}=g\ra [(g^{-1}\ra (g\ra x))\rs x]$.
Now, since both $g$ and $g^{-1}$ belong to the filter $G_{\alg A}$, it follows that $(g^{-1}\ra (g\ra x))\rs x\in G_{\alg A}$. But we have seen before that this element is $\leq 1$, and so $(g^{-1}\ra (g\ra x))\rs x=1$. Thus $g^{-1}\ra (g\ra x)\leq x$.

To verify \eqref{123456}, suppose that $x\in I_{\alg A}$. Then $(g\ra x)\ra 1=(g\ra 1)\rs (x\ra 1)=g^{-1}\rs 1=g$, and hence $x\leq 1$ implies $(g\ra x)\ra x\leq (g\ra x)\ra 1=g$. Since the element $g$ is maximal, we have $g=(g\ra x)\rs x\in G_{\alg A}$, whence $(g\ra x)\ra x\in G_{\alg A}$. However, $(g\ra x)\ra x\leq g$, and so $(g\ra x)\ra x=g$ which yields $g\ra x\leq ((g\ra x)\ra x)\rs x=g\rs x$. The proof of the inequality $g\rs x\leq g\ra x$ is parallel. 
Thus $g\ra x=g\rs x$ for all $g\in G_{\alg A}$ and $x\in I_{\alg A}$.

(3) $\Rightarrow$ (1). 
We prove the map 
\[
\eta\colon (a,g)\in I_{\alg A}\times G_{\alg A} \mapsto g\ra a\in A
\]
is an isomorphism from $\alg{I}_{\alg A}\times\alg{G}_{\alg A}^{\dagger}$ onto $\alg{A}$. 
For any $g,h\in G_{\alg A}$ and $a,b\in I_{\alg A}$ we have 
\begin{align}
\begin{split}\label{6546s4h4gfh64}
(g\rs h)\rs b &= (g\rs h)\ra b = b\st (g\rs h)^{-1} = b\st (h^{-1}\st g)= (b\st h^{-1})\st g\\
&= (h\ra b)\st g = g^{-1}\ra (h\ra b)
\end{split}
\end{align}
by \eqref{123456}, Lemma \ref{L4} and by associativity of $\st$. Then, since $g^{-1}\ra (g\ra a)=a$ by Lemma \ref{lemJ1} (6),
\begin{align*}
(g\rs h)\ra (a\ra b) &= (g\rs h)\rs (a\ra b) = a\ra ((g\rs h)\rs b)\\
&= a\ra (g^{-1}\ra (h\ra b)) = (g^{-1}\ra (g\ra a))\ra (g^{-1}\ra (h\ra b))\\ 
&\geq (g\ra a)\ra (h\ra b)
\end{align*}
by Lemma \ref{L1} (8).
On the other hand, again using Lemma \ref{lemJ1} (6), Lemma \ref{L1} \eqref{vl8} and \eqref{6546s4h4gfh64}, 
\begin{align*}
(g\ra a)\ra (h\ra b) &= (g\ra a)\ra [g\ra (g^{-1}\ra (h\ra b))]\\
&\geq a\ra (g^{-1}\ra (h\ra b)) = a\ra ((g\rs h)\rs b)\\
&= (g\rs h)\rs (a\ra b) = (g\rs h)\ra (a\ra b).
\end{align*}
Thus, for all $g,h\in G_{\alg A}$ and $a,b\in I_{\alg A}$ we have
\[
(g\rs h)\ra (a\ra b)=(g\ra a)\ra (h\ra b).
\] 
We can analogously show that 
\[
(g\ra h)\rs (a\rs b)=(g\rs a)\rs (h\rs b). 
\]
Therefore, since $I_{\alg A}\times G_{\alg A}$ is regarded as the universe of the pseudo-BCI-algebra $\alg{I}_{\alg A}\times\alg{G}_{\alg A}^{\dagger}$ (not of $\alg{I}_{\alg A}\times\alg{G}_{\alg A}$), 
we get 
$\eta ((a,g)\ra (b,h)) = \eta(a\ra b,g\rs h) = (g\rs h)\ra (a\ra b) = (g\ra a)\ra (h\ra b) = \eta(a,g)\ra\eta(b,h)$
and 
$\eta ((a,g)\rs (b,h)) = \eta(a\rs b,g\ra h) = (g\ra h)\rs (a\rs b) = (g\rs a)\rs (h\rs b) = \eta(a,g)\rs \eta(b,h)$.
Thus $\eta$ is indeed a homomorphism from $\alg{I}_{\alg A}\times\alg{G}_{\alg A}^{\dagger}$ onto $\alg{A}$.

It remains to show that $\eta$ is a bijection.
Surjectivity:
By Lemma \ref{lem-intcast}, $(x\ra 1)^{-1}\ra x\in I_{\alg A}$ for every $x\in A$. Since $x\ra 1\in G_{\alg A}$, by Lemma \ref{lemJ1} (6) we obtain 
\[
x = (x\ra 1)\ra ((x\ra 1)^{-1}\ra x) = \eta ((x\ra 1)^{-1}\ra x,x\ra 1),
\]
hence the map $\eta$ is onto.
Injectivity:
Suppose that $\eta(a,g) = 1$, i.e. $g\ra a = 1$. Then $g\leq a$ which entails $g = a$ as $g$ is a maximal element. Also $a\leq 1$, and hence $g = a = 1$. Thus $\eta^{-1}(1)=\{(1,1)\}$. Since $\eta$ is a homomorphism whose kernel is trivial, we conclude that $\eta$ is injective.
\end{proof}

The homomorphism $\delta$ defined by \eqref{delta} is a retraction of $\alg A$ onto $\alg{G}_{\alg A}$.
By Lemma \ref{lem-grcast} we know that $x\in G_{\alg A}$ iff $x=\delta(x)$. Using this fact and recalling Proposition \ref{prop-idealterms}, we obtain that $G_{\alg A}$ is a filter of $\alg A$ if and only if $\alg A$ satisfies the identities
\[
\delta(t_1(x,y_1,y_2)) = t_1(x,\delta(y_1),\delta(y_2))
\quad\text{and}\quad
\delta(t_2(x,y)) = t_2(x,\delta(y)),
\]
where $t_1(x,y_1,y_2)$ and $t_2(x,y)$ are the ideal terms of Proposition \ref{prop-idealterms}. Analogously, by Lemma \ref{lem-intcast} we have $\delta(x)\ra x\in I_{\alg A}$ for every $x\in A$, whence $x\in I_{\alg A}$ iff $x=\delta(x)\ra x$, and hence the condition \eqref{123456} can be captured by the identity 
\[
\delta(y)\ra (\delta(x)\ra x) = \delta(y)\rs (\delta(x)\ra x).
\]
Therefore:

\begin{cor}
The class of pseudo-BCI-algebras $\alg A$ isomorphic to $\alg{I}_{\alg A}\times\alg{G}_{\alg A}$ is a relative subvariety of the quasivariety $\var{PBCI}$.
\end{cor}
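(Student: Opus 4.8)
The plan is to convert the isomorphism requirement into identities holding relative to $\var{PBCI}$, since a relative subvariety of $\var{PBCI}$ is by definition a subclass of the form $\{\alg A\in\var{PBCI}\mid \alg A \text{ satisfies } \Sigma\}$ for some set $\Sigma$ of identities. By Theorem~\ref{thm-direktnirozklad}, $\alg A\cong\alg I_{\alg A}\times\alg G_{\alg A}$ if and only if $G_{\alg A}$ is a filter of $\alg A$ (condition~(2)), equivalently if and only if $\alg A$ satisfies the conditions of Lemma~\ref{lemJ1} together with \eqref{123456} (condition~(3)). So it suffices to exhibit finitely many identities whose conjunction is equivalent, over $\var{PBCI}$, to one of these algebraic conditions, and for this I would simply assemble the two equivalences prepared in the discussion preceding the statement.

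First I would recall that $\delta$ from \eqref{delta}, namely $x\mapsto (x\ra 1)\ra 1$, is a retraction of $\alg A$ onto $\alg G_{\alg A}$, so that $z\in G_{\alg A}$ iff $z=\delta(z)$ and, as $y$ ranges over $A$, $\delta(y)$ ranges over all of $G_{\alg A}$; dually, by Lemma~\ref{lem-intcast}, $\delta(x)\ra x\in I_{\alg A}$ for every $x$, with $w\in I_{\alg A}$ iff $w=\delta(w)\ra w$, so $\delta(x)\ra x$ ranges over all of $I_{\alg A}$. Using these parametrizations I would translate the conditions into unrestricted identities. By Proposition~\ref{prop-idealterms}, and since $G_{\alg A}$ always contains $1$ and is closed under $t_3(y)=y\ra 1$, being a filter amounts to closure of $G_{\alg A}$ under $t_1$ and $t_2$, which becomes the two identities $\delta(t_1(x,y_1,y_2))=t_1(x,\delta(y_1),\delta(y_2))$ and $\delta(t_2(x,y))=t_2(x,\delta(y))$. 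Since condition~(2) already characterizes the isomorphism, these two identities alone cut out the class; alternatively, via condition~(3), associativity of $\ci$ is already an identity in the derived term $\ci$, and \eqref{123456} is captured by the single identity $\delta(y)\ra(\delta(x)\ra x)=\delta(y)\rs(\delta(x)\ra x)$.

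The step I expect to be the crux is verifying that each quantifier-restricted condition is genuinely equivalent to its displayed unrestricted identity. In the direction from closure to the identity I would use that $\delta$ is a homomorphism with $\delta\circ\delta=\delta$, so that $\delta(t_i(x,y_1,\dots))=\delta(t_i(x,\delta(y_1),\dots))$, and the latter equals $t_i(x,\delta(y_1),\dots)$ precisely because the arguments $\delta(y_j)$ lie in $G_{\alg A}$ and $G_{\alg A}$ is closed under $t_i$; conversely, substituting $y_j\mapsto g_j\in G_{\alg A}$ and reading $z=\delta(z)$ as ``$z\in G_{\alg A}$'' recovers closure. The \eqref{123456} side is the same bookkeeping: substituting $\delta(y)$ for the $G_{\alg A}$-argument and $\delta(x)\ra x$ for the $I_{\alg A}$-argument both forces membership and, by the surjectivity of these parametrizations, ranges over all admissible pairs $g\in G_{\alg A}$, $x\in I_{\alg A}$. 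Once these equivalences are in place, the class of pseudo-BCI-algebras isomorphic to $\alg I_{\alg A}\times\alg G_{\alg A}$ coincides with the members of $\var{PBCI}$ satisfying the exhibited identities, which is by definition a relative subvariety.
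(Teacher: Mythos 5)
Your proposal is correct and takes essentially the same route as the paper: it reduces the corollary to condition (2) (or (3)) of Theorem~\ref{thm-direktnirozklad} and converts it into unrestricted identities using the retraction $\delta$, the fact that $G_{\alg A}$ is exactly the fixed-point set of $\delta$, and the parametrization $x\mapsto\delta(x)\ra x$ of $I_{\alg A}$. The equivalence check you flag as the crux (via $\delta$ being an idempotent homomorphism and Proposition~\ref{prop-idealterms}) is precisely the bookkeeping the paper carries out, so there is nothing to add.
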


The condition \eqref{123456}, which is obviously redundant in the case of BCI-algebras, cannot be omitted. 
The following example (found by Prover9-Mace4) shows that there exist pseudo-BCI-algebras satisfying the conditions (1)--(6) of Lemma \ref{lemJ1} but not \eqref{123456}.

\begin{exa}
Let $\alg A$ be the pseudo-BCI-algebra defined by the tables

\begin{center}
\begin{tabular}{c|cccccc}
$\ra$ & $a$ & $b$ & $x$ & $y$ & $g$ & $1$\\ \hline
$a$    & $1$ & $b$ & $g$ & $y$ & $g$ & $1$\\
$b$   & $a$ & $1$ & $x$ & $g$ & $g$ & $1$\\
$x$   & $g$ & $x$ & $1$ & $a$ & $1$ & $g$\\
$y$   & $y$ & $g$ & $b$ & $1$ & $1$ & $g$\\
$g$   & $y$ & $x$ & $b$ & $a$ & $1$ & $g$\\
$1$   & $a$ & $b$ & $x$ & $y$ & $g$ & $1$
\end{tabular}
\qquad
\begin{tabular}{c|cccccc}
$\rs$ & $a$ & $b$ & $x$ & $y$ & $g$ & $1$\\ \hline
$a$    & $1$ & $b$ & $x$ & $g$ & $g$ & $1$\\
$b$   & $a$ & $1$ & $g$ & $y$ & $g$ & $1$\\
$x$   & $x$ & $g$ & $1$ & $b$ & $1$ & $g$\\
$y$   & $g$ & $y$ & $a$ & $1$ & $1$ & $g$\\
$g$   & $x$ & $y$ & $a$ & $b$ & $1$ & $g$\\
$1$   & $a$ & $b$ & $x$ & $y$ & $g$ & $1$
\end{tabular}
\end{center}
Then $I_{\alg A}=\{a,b,1\}$, $G_{\alg A}=\{g,1\}$ and $\alg A$ satisfies the equivalent conditions of Lemma \ref{lemJ1}, but, e.g., $g\ra a=y$ while $g\rs a=x$. Hence $\alg A$ is not isomorphic to $\alg{I_A}\times\alg{G_A}$, which is evident also from the fact that $\alg{I}_{\alg A}$ and $\alg{G}_{\alg A}$ are BCI-algebras, but $\alg A$ is a proper pseudo-BCI-algebra.
\end{exa}


\section*{Acknowledgements}

The authors would like to thank the referees for their comments and suggestions.

Both P. E. and J. K. were supported by the Palack\'{y} University project ``Mathematical structures'' IGA PrF 2015010. 
J. K. was also supported by the bilateral project ``New perspectives on residuated posets'' of the Austrian Science Fund (FWF): project I 1923-N25, and the Czech Science Foundation (GA\v{C}R): project 15-34697L.

\nocite{*}
\bibliographystyle{amsplain}
\bibliography{references}

\providecommand{\bysame}{\leavevmode\hbox to3em{\hrulefill}\thinspace}
\providecommand{\MR}{\relax\ifhmode\unskip\space\fi MR }
\providecommand{\MRhref}[2]{%
  \href{http://www.ams.org/mathscinet-getitem?mr=#1}{#2}
}
\providecommand{\href}[2]{#2}
\begin{thebibliography}{10}

\bibitem{BP89}
W.~J. Blok and D.~Pigozzi, \emph{Algebraizable logics}, Memoirs of the AMS, no.
  396, AMS, Providence, Rhode Island, USA, 1989.

\bibitem{BR99}
W.~J. Blok and J.~G. Raftery, \emph{Ideals in quasivarieties of algebras},
  Models, Algebras and Proofs (X.~Caicedo and C.~H. Montenegro, eds.), Lecture
  Notes in Pure and Applied Mathematics, vol. 203, Marcel Dekker, New York,
  1999, pp.~167--186.

\bibitem{BR08}
\bysame, \emph{Assertionally equivalent quasivarieties}, Int. J. Algebra
  Comput. \textbf{18} (2008), 589--681.

\bibitem{Bo}
B.~Bosbach, \emph{Concerning cone algebras}, Algebra Univers. \textbf{15}
  (1982), 58--66.

\bibitem{BKLT}
M.~Botur, J.~K\"{u}hr, L.~Liu, and C.~Tsinakis, \emph{The {C}onrad program:
  {F}rom $\ell$-groups to algebras of logic}, J. Algebra, to appear.

\bibitem{CEL}
I.~Chajda, G.~Eigenthaler, and H.~L\"{a}nger, \emph{Congruence classes in
  universal algebra}, Research and Exposition in Mathematics, vol.~32,
  Heldermann Verlag, Lemgo, 2003.

\bibitem{ciungu}
L.~C. Ciungu, \emph{Non-commutative multiple-valued logic algebras}, Springer
  Monographs in Mathematics, Springer, 2014.

\bibitem{DNGI-bl}
A.~{D}i Nola, G.~Georgescu, and A.~Iorgulescu, \emph{Pseudo-{BL} algebras: Part
  {I}}, Mult.-Valued Log. \textbf{8} (2002), 673--714.

\bibitem{DJ}
W.~A. Dudek and Y.~B. Jun, \emph{Pseudo-{BCI} algebras}, East Asian Math. J.
  \textbf{24} (2008), 187--190.

\bibitem{DK}
A.~Dvure\v{c}enskij and J.~K\"{u}hr, \emph{On the structure of linearly ordered
  pseudo-{BCK}-algebras}, Arch. Math. Logic \textbf{48} (2009), 771--791.

\bibitem{D11}
G.~Dymek, \emph{On two classes of pseudo-{BCI}-algebras}, Discuss. Math. Gen.
  Algebra Appl. \textbf{31} (2011), 217--230.

\bibitem{D12a}
\bysame, \emph{Atoms and ideals of pseudo-{BCI}-algebras}, Comment. Math.
  \textbf{52} (2012), 73--90.

\bibitem{D12b}
\bysame, \emph{{p}-semisimple pseudo-{BCI}-algebras}, J. Mult.-Val. Log. Soft
  Comput. \textbf{19} (2012), 461--474.

\bibitem{D14}
\bysame, \emph{On compatible deductive systems of pseudo-{BCI}-algebras}, J.
  Mult.-Val. Log. Soft Comput. \textbf{22} (2014), 167--187.

\bibitem{D15}
\bysame, \emph{On a periodic part of pseudo-{BCI}-algebras}, Discuss. Math.
  Gen. Algebra Appl. (2015).

\bibitem{DKD13}
G.~Dymek and A.~Kozanecka-Dymek, \emph{Pseudo-{BCI}-logic}, Bull. Sect. Logic
  \textbf{42} (2013), 33--41.

\bibitem{Fl}
I.~Fleischer, \emph{Every {BCK}-algebra is a set of residuables in an integral
  pomonoid}, J. Algebra \textbf{119} (1988), 360--365.

\bibitem{GJKO}
N.~Galatos, P.~Jipsen, T.~Kowalski, and H.~Ono, \emph{Residuated lattices: An
  algebraic glimpse at substructural logics}, Studies in Logic and the
  Foundations of Mathematics, vol. 151, Elsevier, Amsterdam, 2007.

\bibitem{GI}
G.~Georgescu and A.~Iorgulescu, \emph{Pseudo-{BCK} algebras: An extension of
  {BCK} algebras}, Combinatorics, Computability and Logic (C.~S. Calude, M.~J.
  Dinneen, and S.~Sburlan, eds.), Springer, London, 2001, pp.~97--114.

\bibitem{GI-mv}
\bysame, \emph{Pseudo-{MV} algebras}, Mult.-Valued Log. \textbf{6} (2001),
  95--135.

\bibitem{GIP-ph}
G.~Georgescu, A.~Iorgulescu, and V.~Preoteasa, \emph{Pseudo-hoops}, J.
  Mult.-Val. Log. Soft Comput. \textbf{11} (2005), 153--184.

\bibitem{gorbunov}
V.~A. Gorbunov, \emph{Algebraic theory of quasivarieties}, Siberian School of
  Algebra and Logic, Plenum, New York, 1998.

\bibitem{gratzer}
G.~Gr\"{a}tzer, \emph{General lattice theory}, 2nd ed., Birkh\"{a}ser, Basel,
  2003.

\bibitem{GU}
H.~P. Gumm and A.~Ursini, \emph{Ideals in universal algebras}, Algebra Univers.
  \textbf{19} (1984), 45--54.

\bibitem{HK}
R.~Hala\v{s} and J.~K\"{u}hr, \emph{Deductive systems and annihilators of
  pseudo {BCK}-algebras}, Italian J. Pure Appl. Math. \textbf{25} (2009),
  83--94.

\bibitem{iorgulescu}
A.~Iorgulescu, \emph{Algebras of logic as {BCK}-algebras}, Editura ASE,
  Bucharest, 2008.

\bibitem{I66}
K.~Is\'{e}ki, \emph{An algebra related with a propositional calculus}, Proc.
  Japan Acad. \textbf{42} (1966), 26--29.

\bibitem{Jo53}
B.~J\'{o}nsson, \emph{On the representation of lattices}, Math. Scand.
  \textbf{1} (1953), 193--206.

\bibitem{JKN}
Y.~B. Jun, H.~S. Kim, and J.~Neggers, \emph{On pseudo-{BCI} ideals of
  pseudo-{BCI} algebras}, Mat. Vesnik \textbf{58} (2006), 39--46.

\bibitem{KS}
Y.~H. Kim and K.~S. So, \emph{On minimality in pseudo-{BCI}-algebras}, Commun.
  Korean Math. Soc. \textbf{27} (2012), 7--13.

\bibitem{Ko}
Y.~Komori, \emph{The class of {BCC}-algebras is not a variety}, Math. Japon.
  \textbf{29} (1984), 391--394.

\bibitem{JK-csmj05}
J.~K\"{u}hr, \emph{Ideals of noncommutative {DR$\ell$}-monoids}, Czechoslovak
  Math. J. \textbf{55} (2005), 97--111.

\bibitem{JK-cga}
\bysame, \emph{Pseudo {BCK}-algebras and residuated lattices}, Contributions to
  General Algebra (I.~Chajda, G.~Dorfer, G.~Eigenthaler, R.~Hala\v{s},
  W.~M\"{u}ller, and R.~P\"{o}schel, eds.), vol.~16, Johannes Heyn, Klagenfurt,
  2005, pp.~139--144.

\bibitem{JK-hab}
\bysame, \emph{Pseudo-{BCK}-algebras and related structures}, Habilitation
  Thesis, Palack\'{y} University, Olomouc, 2007.

\bibitem{JK-ja07}
\bysame, \emph{Representable pseudo-{BCK}-algebras and integral residuated
  lattices}, J. Algebra \textbf{317} (2007), 354--364.

\bibitem{JK-jmvlsc10}
\bysame, \emph{Boolean and central elements and {C}antor-{B}ernstein theorem in
  bounded pseudo-{BCK}-algebras}, J. Mult.-Val. Log. Soft Comput. \textbf{16}
  (2010), 387--404.

\bibitem{LP}
K.~J. Lee and C.~H. Park, \emph{Some ideals of pseudo {BCI}-algebras}, J. Appl.
  Math. Informatics \textbf{27} (2009), 217--231.

\bibitem{OK}
H.~Ono and Y.~Komori, \emph{Logics without the contraction rule}, J. Symb.
  Logic. \textbf{50} (1985), 169--201.

\bibitem{Pa}
M.~Palasi\'{n}ski, \emph{An embedding theorem for {BCK}-algebras}, Math. Sem.
  Notes Kobe Univ. \textbf{10} (1982), 749--751.

\bibitem{Ra}
J.~Rach\r{u}nek, \emph{A non-commutative generalization of {MV}-algebras},
  Czechoslovak Math. J. \textbf{52} (2002), 255--273.

\bibitem{RvA}
J.~G. Raftery and C.~J. van Alten, \emph{Residuation in commutative ordered
  monoids with minimal zero}, Reports on Math. Logic \textbf{34} (2000),
  23--57.

\bibitem{vA02}
C.~J. van Alten, \emph{Representable biresiduated lattices}, J. Algebra
  \textbf{247} (2002), 672--691.

\bibitem{vA06}
\bysame, \emph{On varieties of biresiduation algebras}, Studia Logica
  \textbf{83} (2006), 425--445.

\bibitem{bci-book}
H.~Yisheng, \emph{{BCI}-algebra}, Science Press, Beijing, 2006.

\bibitem{Z10b}
X.~Zhang, \emph{Pseudo-{BCI} filters and subalgebras of pseudo-{BCI} algebras},
  2010 IEEE Int. Conf. on Progress in Informatics and Computing (Shanghai),
  December 2010, pp.~140--144.

\bibitem{Z10a}
\bysame, \emph{Pseudo-{BCK} part and anti-grouped part of pseudo-{BCI}
  algebras}, 2010 IEEE Int. Conf. on Progress in Informatics and Computing
  (Shanghai), December 2010, pp.~127--131.

\bibitem{Z13}
\bysame, \emph{{BCC}-algebras and residuated partially ordered groupoids},
  Math. Slovaca \textbf{63} (2013), 397--410.

\bibitem{ZJ14}
X.~Zhang and Y.~B. Jun, \emph{Anti-grouped pseudo-{BCI} algebras anti-grouped
  filters}, Fuzzy Syst. Math. \textbf{28} (2014), 21--33.

\bibitem{ZL}
X.~Zhang and W.~H. Li, \emph{On pseudo-{BL} algebras and {BCC}-algebras}, Soft
  Comput. \textbf{10} (2006), 941--952.

\end{thebibliography}
\end{document}